    \theoremstyle{plain}
    \newtheorem{theorem}{Theorem}
    \newtheorem{lemma}[theorem]{Lemma}
    \theoremstyle{definition}
    \newtheorem{assumption}{Assumption}
    \newtheorem*{remark*}{Remark}
    \newcommand{\pr}{\mathbf P}
    \newcommand{\e}{\mathbf E}
\begin{document}
    \title[Lower deviations for branching processes with immigration]
    {Lower deviations for branching processes with immigration}
    \thanks{Funded by the Deutsche Forschungsgemeinschaft (DFG, German Research Foundation) – Project-ID 317210226 – SFB 1283}
    \author[Sharipov]{Sadillo Sharipov}
    \address{V.I.Romanovskiy Institute of Mathematics, Tashkent, Uzbekistan}
    \email{sadi.sharipov@yahoo.com}

    \author[Wachtel]{Vitali Wachtel}
    \address{Faculty of Mathematics, Bielefeld University, Germany}
    \email{wachtel@math.uni-bielefeld.de}

\begin{abstract}
Let $\{Y_{n}$, $n \geq 1\}$ be a critical branching process with immigration having finite variance for the offspring number of particles and finite mean for the immigrating number of particles. In this paper, we study lower deviation probabilities for $Y_{n}$. More precisely, assuming that $k,n \to \infty$ such that $k=o\left(n \right)$, we investigate the asymptotics of $\pr\left(Y_{n} \leq k \right)$ and $\pr\left(Y_{n} = k \right)$. Our results clarify the role of the moment conditions in the local limit theorem for $Y_n$
proven in Mellein~\cite{Mellein82}.
\end{abstract}


    \keywords{Branching process, local limit theorem, lower deviations}
    \subjclass{Primary 60J80; Secondary 60F10}
    \maketitle

\section{Introduction, main results and discussion}
Let $\{Y_n\}$ be a branching process with offspring distribution
$\{p_k\}$ and with immigration governed by the distribution $\{q_k\}$.
A mathematically rigorous description can be as follows. Let
$\{\xi_{n,j}\}$ be independent random variables with distribution
$\{p_k\}$ and let $\{\eta_n\}$ be independent random variables with distribution $\{q_k\}$. We assume also that the sequences $\{\xi_{n,j}\}$ and $\{\eta_n\}$ are independent from each other. Then the process $\{Y_n\}$ can be defined by the following recursive equation
\begin{equation}
\label{eq:Y-def}
Y_{n+1}=\sum_{j=1}^{Y_n}\xi_{n+1,j}+\eta_{n+1},\quad n\geq 0.
\end{equation}
The starting point $Y_{0}$ can be deterministic or random; one only assumes that $Y_{0}$ is independent of the sequences $\{\xi_{n,j}\}$ and $\{\eta_n\}$.

In this paper we shall consider only critical processes, that is,
\begin{equation}
\label{eq:crit}
\sum_{k=1}^{\infty} kp_{k}=1.
\end{equation}
Furthermore, we shall always assume that
\begin{equation}
\label{eq:moments1}
B:=\sum_{k=1}^\infty k\left(k-1\right)p_{k}\in\left(0,\infty\right)
\quad\text{and}\quad
\lambda:=\sum_{k=1}^{\infty} kq_{k}\in\left(0,\infty\right).
\end{equation}
It is worth mentioning that the strict positivity of $\sum_{k=1}^{\infty} kq_{k}$ means that the immigration component of the process is a non-degenerate and, consequently, standard Galton-Watson processes are excluded.

Seneta~\cite{Seneta70} has shown that if \eqref{eq:crit} and \eqref{eq:moments1} are valid then $\frac{2Y_{n}}{Bn}$ converges weakly towards a $\Gamma$-distribution. More precisely, for every $x>0$ one has
\begin{equation}
\label{eq:gamma-limit}
\lim_{n\to\infty}\pr\left(\frac{2Y_{n}}{Bn}\leq x\right)
=\frac{1}{\Gamma\left(\gamma\right)}\int_0^x u^{\gamma-1}e^{-u}du,
\end{equation}
where
$$
\gamma:=\frac{2\sum_{k=1}^{\infty} kq_{k}}{B}.
$$
Mellein~\cite{Mellein82} has proven the corresponding local limit theorem. Assuming that
\begin{equation}
\label{eq:moments2}
\sum_{k=1}^\infty \left(k^2\log k\right) p_{k}<\infty
\quad\text{and}\quad
\sum_{k=1}^\infty \left(k\log k\right) q_{k}<\infty
\end{equation}
he has shown that, for every fixed $i$,
\begin{equation}
\label{eq:loc-limit}
\pr\left(Y_{n}=k|Y_{n}=i\right) \sim
\frac{1}{\Gamma\left(\gamma\right)}
\left(\frac{2}{B}\right)^{\gamma}
\frac{k^{\gamma-1}}{n^\gamma}
e^{-2k/Bn}
\end{equation}
as $n,k\to\infty$ and $k/n$ remains bounded.

Since the moments in \eqref{eq:moments2} do not show up in the asymptotics \eqref{eq:loc-limit}, it is natural to ask whether that conditions are needed for the local limit theorem. This question is our main motivation for considering lower deviation probabilities for $Y_{n}$. More precisely, we are going to determine the asymptotic behaviour of the probabilities $\pr\left(Y_{n}\leq k\right)$ and $\pr\left(Y_{n}=k\right)$ in the case when $k\to\infty$ but $k=o\left(n\right)$ for processes satisfying \eqref{eq:crit} and \eqref{eq:moments1} only. Comparing asymptotics for local probabilities with the corresponding values of the limiting density, we can then conclude whether the extra condition \eqref{eq:moments2} is really needed for the validity of \eqref{eq:loc-limit}.
\begin{theorem}
\label{thm:main}
Assume that \eqref{eq:crit} and \eqref{eq:moments1} hold. Then there exists a slowly varying function $L\left(x\right)$ such that, for every fixed $i\ge0$,
\begin{equation}
\label{eq:main1}
\pr\left(Y_{n}\leq k|Y_0=i\right)\sim
\frac{1}{\Gamma(\gamma+1)}
\left(\frac{2}{B}\right)^{\gamma}
\frac{k^\gamma L\left(k\right)}{n^\gamma L\left(n\right)}
\end{equation}
for $k\to\infty$ and $k=o\left(n\right)$. If, additionally, $\{p_{k}\}$ and
$\{q_{k}\}$ are aperiodic, then
\begin{equation}
\label{eq:main2}
\pr\left(Y_{n}=k|Y_0=i\right)\sim
\frac{1}{\Gamma(\gamma)}
\left(\frac{2}{B}\right)^{\gamma}
\frac{k^{\gamma-1} L\left(k\right)}{n^{\gamma} L\left(n\right)}.
\end{equation}
For every fixed $k$ there exists $\mu_{k}$ such that
\begin{equation}
\label{eq:main3}
n^{\gamma} L\left(n\right)
\pr\left(Y_{n}=k|Y_0=i\right)\to\mu_{k}.
\end{equation}
The function $L\left(x\right)$ converges to a positive constant provided that \eqref{eq:moments2} holds.
If $\sum_{k=1}^\infty \left(k^{2}\log k\right) p_{k}<\infty$ and
$\sum_{k=1}^\infty \left(k\log k\right) q_{k}=\infty$ then
$L\left(n\right)$ tends to infinity. Moreover, if $\sum_{k=1}^\infty \left(k^{2}\log k\right) p_{k}=\infty$ and
$\sum_{k=1}^\infty \left(k\log k\right) q_{k}<\infty$
then $L(n)$ tends to zero.
\end{theorem}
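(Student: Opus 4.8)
The plan is to work throughout with probability generating functions. Put $f(s)=\sum_kp_ks^k$, $g(s)=\sum_kq_ks^k$, let $f_0(s)=s$ and $f_{j+1}=f\circ f_j$, and set $G_n(s):=\prod_{j=0}^{n-1}g(f_j(s))$; then $\e[s^{Y_n}\mid Y_0=i]=f_n(s)^iG_n(s)$ and $G_n(s)=\e[s^{Y_n}\mid Y_0=0]$. Since $1-f_n(s)=\pr(Z_n>0\mid Z_0=1)=O(1/n)$ for the associated offspring Galton--Watson process $\{Z_n\}$, writing $Y_n$ under $\{Y_0=i\}$ as the independent sum of the descendants of the $i$ initial particles and of the purely immigration–driven part shows $\pr(Y_n=k\mid Y_0=i)=(1+o(1))\,\pr(Y_n=k\mid Y_0=0)$, and likewise for $\{Y_n\le k\}$, uniformly in the ranges considered; so it suffices to study the coefficients of $G_n$.

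The heart of the argument is a uniform asymptotic for $G_n(s)$ as $n\to\infty$ and $s\uparrow1$ with $n(1-s)\to\infty$. From the classical identity
\[
\frac1{1-f_{j+1}(s)}-\frac1{1-f_j(s)}=\frac B2+R\bigl(f_j(s)\bigr),\qquad R(t):=\frac1{1-f(t)}-\frac1{1-t}-\frac B2,
\]
one gets $R(1-)=0$ under \eqref{eq:moments1} and $\frac1{1-f_j(s)}=\frac1{1-s}+\frac B2\,j+\sum_{i<j}R(f_i(s))$. Writing $h(t):=\frac{1-g(t)}{1-t}\uparrow\lambda$ and using $-\log g(t)=(1-g(t))+O\bigl((1-g(t))^2\bigr)$ together with $1-g(f_j(s))=(1-f_j(s))h(f_j(s))$ leads to
\[
-\log G_n(s)=\gamma\log\!\Bigl(\tfrac B2\,n(1-s)\Bigr)-I_p(n,s)-I_q(n,s)+O(1),
\]
where $I_q(n,s)=\sum_{j<n}(1-f_j(s))\bigl(\lambda-h(f_j(s))\bigr)\ge0$ and $I_p$ gathers the contribution of $\sum_iR(f_i(s))$. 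After the change of variable $v=1-f_j(s)$, for which $dv\asymp\frac B2\,v^2\,dj$, both $I_p$ and $I_q$ turn into integrals of the shape $\frac2B\int_{\asymp 2/(Bn)}^{1-s}\frac{\varphi(v)}v\,dv$ with $\varphi\ge0$ and $\varphi(v)\to0$ as $v\to0$; splitting the range at a fixed reference point, and noting that the exponential of such an integral is slowly varying precisely because the integrand vanishes at $0$, one obtains a single slowly varying $L$ with
\[
G_n(s)=(1+o(1))\Bigl(\tfrac2B\Bigr)^{\gamma}\frac{1}{(1-s)^{\gamma}n^{\gamma}}\cdot\frac{L\bigl(1/(1-s)\bigr)}{L(n)}.
\]
The two elementary two–scale estimates $\lambda-h(1-v)\asymp v\sum_{k\le1/v}k^2q_k+\sum_{k>1/v}kq_k$ and $|R(1-v)|\asymp v\sum_{k\le1/v}k^3p_k+\sum_{k>1/v}k^2p_k$ then give
\[
\int_0\frac{\lambda-h(1-v)}{v}\,dv<\infty\iff\sum_kk\log k\,q_k<\infty,\qquad
\int_0\frac{|R(1-v)|}{v}\,dv<\infty\iff\sum_kk^2\log k\,p_k<\infty,
\]
so $L$ converges to a positive constant exactly under \eqref{eq:moments2}; when only one moment condition fails the corresponding integrand keeps a fixed sign, and tracking that sign shows that $L(n)$ tends to $0$ or to $\infty$ according to which of $\{p_k\}$, $\{q_k\}$ is responsible.

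The last step, which I expect to be the main obstacle, is to pass from the generating–function asymptotics to coefficient asymptotics. For $\pr(Y_n\le k)$ one has $\sum_m\pr(Y_n\le m)s^m=G_n(s)/(1-s)$; the naive exponential bound $\pr(Y_n\le k)\le s^{-k}G_n(s)$, optimised at $1-s\asymp1/k$, already yields the correct order $k^{\gamma}L(k)/(n^{\gamma}L(n))$ but with the wrong constant, so it must be upgraded — via Karamata's Tauberian theorem applied to the increasing sequence $\pr(Y_n\le m)$, or equivalently via a local central limit theorem for the exponentially tilted law of $Y_n$ — to an exact asymptotic. For the local probability $\pr(Y_n=k)$ this forces one to extend the analysis of $\prod_jg(f_j(z))$ to complex $z$ near $1$ (feasible from the product structure, since $|f_j(z)|$ remains close to $1$ only for $|z|$ close to $1$) and to run a Fourier inversion over the whole unit circle, in which aperiodicity of $\{p_k\}$ and $\{q_k\}$ is used exactly to discard contributions from arguments bounded away from $1$; a Laplace evaluation of the resulting integral produces the constants $\tfrac1{\Gamma(\gamma)}$ for $\pr(Y_n=k)$ and, after summation, $\tfrac1{\Gamma(\gamma+1)}=\tfrac1{\gamma\Gamma(\gamma)}$ for $\pr(Y_n\le k)$, while $L(1/(1-s))$ is replaced by $L(k)$ because $L$ is slowly varying. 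Finally, \eqref{eq:main3} for fixed $k$ follows by extracting the $k$-th Taylor coefficient from the locally uniform convergence $n^{\gamma}L(n)G_n(z)\to\widetilde c(z)=\sum_k\mu_kz^k$ on a small disc about $0$ (a normal–families argument), which also gives $\mu_k\sim\tfrac1{\Gamma(\gamma)}(2/B)^{\gamma}k^{\gamma-1}L(k)$ as $k\to\infty$, consistently with \eqref{eq:main2}. The truly delicate point is the uniformity in this transfer: obtaining the sharp constant for $\pr(Y_n=k)$ and for $\pr(Y_n\le k)$ simultaneously for all $k\to\infty$ with $k=o(n)$.
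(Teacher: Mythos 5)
Your route is genuinely different from the paper's. The paper argues probabilistically: it conditions on $\theta_n$, the first generation whose immigrants survive to time $n$, shows that the dominant contribution comes from $\theta_n\in[n-k/\varepsilon,\,n-\varepsilon k]$, and on that event factorises $Y_n$ into the surviving Galton--Watson line (handled by the Nagaev--Vakhtel local limit theorem, Lemma~\ref{lem:llt}) times the residual $Y_{n-\theta_n}$ (handled by Seneta's distributional limit \eqref{eq:gamma-limit}); concentration-function bounds (Lemmas~\ref{lemma8}--\ref{lemma10'}) enter only as crude upper estimates on the two extreme ranges of $\theta_n$. You instead aim for sharp asymptotics of $G_n(s)=\prod_{j<n}g(f_j(s))$ near $s=1$ and then transfer to coefficients. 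Your generating-function analysis is essentially right: the identity for $1/(1-f_j(s))$, the change of variables $v=1-f_j(s)$ turning $I_p,I_q$ into $\frac2B\int\varphi(v)v^{-1}dv$-type integrals, and the two-scale estimates that tie finiteness of these integrals to $\sum k^2\log k\,p_k<\infty$ and $\sum k\log k\,q_k<\infty$ all match the content of Lemmas~\ref{lemma3} and~\ref{lemma6} (the paper proves the latter by a direct sum manipulation rather than via $\int\varphi(v)v^{-1}dv$, but the computations are equivalent).

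The gap is exactly where you flag it, and it is more serious than ``delicate uniformity.'' Karamata's Tauberian theorem cannot be applied in the form you cite: your asymptotic for $G_n(s)$ holds only in the regime $n(1-s)\to\infty$, whereas the theorem requires the singular expansion as $s\uparrow 1$ with $n$ fixed, where $G_n(s)\to1$ and the conclusion is vacuous; a genuine ``uniform-in-$n$'' Tauberian theorem would need to be stated and proved. The ``local central limit theorem for the exponentially tilted law'' is also off: tilting $Y_n$ at $1-s\asymp1/k$ with $k=o(n)$ produces a Gamma-type limit (already visible from $G_n(se^{it})/G_n(s)\approx\bigl((1-s)/(1-s-ist)\bigr)^\gamma$), not a Gaussian one, so a Gaussian local CLT is the wrong tool. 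The viable route in your plan is the direct Fourier inversion with a Laplace/saddle-point evaluation, but that requires (i) extending your asymptotic for $G_n$ to complex $z$ in a wedge at $1$ with $|z|<1$, and (ii) uniform decay bounds for $|G_n(re^{it})|$ away from $t=0$ at the tilted radius $r<1$ --- an analogue of Lemma~\ref{lemma8}, which the paper only proves at $r=1$. Neither (i) nor (ii) is supplied, and they are precisely the technical content that the paper's $\theta_n$-decomposition is designed to avoid: by separating out the single surviving immigrant line, the paper reduces to an \emph{already known} local limit theorem for a single critical Galton--Watson process and never needs sharp complex-analytic control of the full product $G_n$. Your normal-families argument for \eqref{eq:main3} is fine once boundedness of $n^\gamma L(n)G_n(z)$ on a small disc is established (which does follow from $|G_n(z)|\le G_n(|z|)$ and your real-$s$ asymptotic), and the reduction from $Y_0=i$ to $Y_0=0$ is also correct, though it tacitly uses that $\max_j\pr(Z_n=j\mid Z_0=i)\lesssim i/n^2$ rather than just $1-f_n(s)=O(1/n)$.
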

This result shows that \eqref{eq:loc-limit} can fail if the moment conditions in \eqref{eq:moments2} are not valid. This is quite different from the case of critical Galton-Watson processes. Nagaev and Wachtel~\cite{NV06} have shown that the existence of the second moment is sufficient for the local limit theorem.

We conjecture that the domain of lower deviations is the only zone where one needs \eqref{eq:moments2} for the local asymptotics \eqref{eq:loc-limit}. In other words, we believe that the following statement holds: for every fixed $\varepsilon>0$ one has
\begin{equation}
\label{eq:conjecture}
\sup_{k\geq\varepsilon n}\left|n\pr\left(Y_{n}=k\right)
-\frac{1}{\Gamma\left(\gamma\right)}\left(\frac{2k}{Bn}\right)^{\gamma-1}e^{-2k/Bn}\right|\to 0.
\end{equation}
Noting that $u^{\gamma-1}e^{-u}\to 0$ as $u\to 0$ for $\gamma>1$ and combining \eqref{eq:conjecture} and  \eqref{eq:main2}, one gets for $\gamma>1$ the most standard version of the local limit theorem:
$$
\sup_{k\ge0}\left|n\pr\left(Y_{n}=k\right)
-\frac{1}{\Gamma\left(\gamma\right)}\left(\frac{2k}{Bn}\right)^{\gamma-1}e^{-2k/Bn}\right|\to 0.
$$
(One should notice that this relation is not stronger than \eqref{eq:loc-limit}, because for $k=o\left(n\right)$ one gets only
$\pr\left(Y_{n}=k\right)=o\left(n^{-1}\right)$.)

Pakes~\cite{Pakes72} has shown \eqref{eq:main3} under the condition \eqref{eq:moments2}. Therefore, Theorem~\ref{thm:main} generalizes the results of Mellein~\cite{Mellein82} and Pakes~\cite{Pakes72} to the whole class of processes satisfying \eqref{eq:moments1}. Our Theorem~\ref{thm:main} can be used to derive in a rather simple way asymptotics for the so-called harmonic moments $\e[Y_n^{-r};Y_n>0]$ and. This has been done, by other means, in Li and Zhang~\cite{LiZhang21}.

Our approach to lower deviation probabilities differs from that in the papers by Mellein and by Pakes. We determine first an optimal strategy which leads to atypically small values of $Y_{n}$. This allows us to reduce a problem on small deviations to normal deviations, where one uses the known in the literature results. To describe that optimal strategy, we introduce some additional notation. Let $\{Z_{n}^{(i)}\}$,
$i\ge1$ be a sequence of independent Galton-Watson processes with offspring distribution $\{p_{k}\}$ and with $\pr\left(Z_{0}^{\left(i\right)}=k\right)=q_{k}$, $k\ge 0$. It is then immediate from \eqref{eq:Y-def} that if $Y_{0}=0$
then
\begin{equation}
\label{eq:repre}
Y_{n}=\sum_{i=1}^{n} Z^{\left(i\right)}_{n-i}\quad\text{in distribution}.
\end{equation}
Define now
\begin{equation}
\label{eq:theta}
\theta_{n}:=\inf\{i\leq n: Z^{\left(i\right)}_{n-i}>0\}.
\end{equation}
In words, $\theta_{n}$ is the first generation where immigrants have descendants at time $n$. We know from \eqref{eq:gamma-limit} that $Y_{n}$ is typically of size $n$ and that a Galton-Watson process $Z_{n}$ conditioned on non-extinction is also of order $n$. Thus, it is quite plausible to assume that if we want to have only $k$ particles in the $n$th generation, then $\theta_{n}$ should be such that $n-\theta_{n}$ is of order $k$. In the course of the proof of Theorem~\ref{thm:main}, we show that this strategy is indeed optimal.

A strategy when one keeps the size of the process as small as possible up to an appropriate time moment is quite standard for branching processes. This type of behaviour is also optimal for lower deviations of supercritical Galton-Watson processes; see, for example,
\cite{FW07}.

The paper is organized as follows. In Section 2, we provide some results about upper bounds for lower deviation probabilities.
Section 3 contains estimates for the concentration function, and Section 3 gathers the proof of the main theorem. 

 \section{Analysis of generating functions and some preliminary probabilistic estimates.}
 Define $f\left(s\right):=\sum_{k=0}^\infty p_{k}s^{k}$ and
 $h\left(s\right):=\sum_{k=0}^\infty q_ks^k$. In other words, $f\left(s\right)$ is the offspring generating function of the processes $\{Y_{n}\}$ and 
 $\{Z_{n}^{\left(i\right)}\}$, and $h\left(s\right)$
 is the generating function of the number of immigrants in $\{Y_{n}\}$ and of
 $Z_{0}^{\left(i\right)}$. Let $f_{n}\left(s\right)$ denote the $n$th iteration of $f$.
 
 In this section we shall {\it always} assume that $Y_0=0$.
 
 It is immediate from \eqref{eq:repre} that
$$
H_n(s):=\e\left[s^{Y_n}\right]
=\prod_{k=0}^{n-1}h(f_k(s)).
$$

 For the random variable $\theta_{n}$ defined in \eqref{eq:theta} we then have for every $l\leq n$
 \begin{equation*}
 \pr\left(\theta_{n}>l\right)=\prod_{i=1}^l\pr\left(Z_{n-i}^{(i)}=0\right)
 =\prod_{i=1}^{l} h\left(f_{n-i}\left(0\right)\right)
 =\prod_{k=n-l}^{n-1}h\left(f_{k}\left(0\right)\right)
 \end{equation*}
and
\begin{equation*}
 \pr\left(\theta_n=l\right)=\pr\left(\theta_n>l-1\right)-\pr\left(\theta_n>l\right)
 =\left(1-h\left(f_{n-l}\left(0\right)\right)\right)\prod_{k=n-l+1}^{n-1}h\left(f_{k}\left(0\right)\right).
 \end{equation*}
 Define
 $$
 F\left(0\right):=1\quad\text{and}\quad
 F\left(n\right):=\prod_{k=0}^{n-1}h(f_k(0)),\ n\geq 1.
 $$
 Then
 \begin{equation}
 \label{eq:theta-prob1}
 \pr(\theta_n>l)=\frac{F\left(n\right)}{F\left(n-l\right)}
 \end{equation}
 and
 \begin{equation}
 \label{eq:theta-prob2}
 \pr\left(\theta_n=l\right)=\left(1-h\left(f_{n-l}(0)\right)\right)\frac{F(n)}{F(n-l+1)}
 \end{equation}
 for every $l\le n$.\\
 We begin with a simple lemma, which will be used later in the analysis of the asymptotic behaviour of the sequence $F(n)$.
\begin{lemma}
\label{lemma2}
For $\left|s \right|\leq 1$, it holds that
\begin{equation}
\label{eq:15}
 \log h\left(s\right)=-h'\left(1\right)\left(1-s\right)+o\left(s-1\right), \ \ s\to 1.
\end{equation}
\end{lemma}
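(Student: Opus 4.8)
The plan is to treat this as a routine Taylor-type expansion of $\log h$ at $s=1$, but one has to be careful because we are only assuming $\lambda = h'(1) = \sum_k k q_k < \infty$; no second moment of $\{q_k\}$ is available, so one cannot simply write a second-order Taylor remainder. First I would reduce the logarithm to $h$ itself: since $h(1)=1$ and $h$ is continuous at $1$, $\log h(s) = \log(1-(1-h(s))) = -(1-h(s)) + O((1-h(s))^2)$ as $s\to 1$, so it suffices to prove $1 - h(s) = h'(1)(1-s) + o(1-s)$, i.e. $\frac{1-h(s)}{1-s}\to h'(1)$ as $s\to 1$.

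The key step is to establish this for $s\uparrow 1$ along the reals using the mean-value / monotone-convergence argument, and then extend to complex $s\to 1$. For real $s\in(0,1)$ write
\begin{equation*}
\frac{1-h(s)}{1-s} = \sum_{k=1}^\infty q_k\,\frac{1-s^k}{1-s}
= \sum_{k=1}^\infty q_k\,(1 + s + \cdots + s^{k-1}).
\end{equation*}
Each summand is nonnegative, nondecreasing in $s$, and tends to $k q_k$ as $s\uparrow 1$; by monotone convergence the sum tends to $\sum_k k q_k = h'(1)$, which is finite by \eqref{eq:moments1}. This gives the claim for $s\uparrow 1$ along $(0,1)$. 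For the general complex approach $s\to 1$ with $|s|\le 1$, I would note $\bigl|\tfrac{1-s^k}{1-s}\bigr| = |1+s+\cdots+s^{k-1}| \le k$, so the series $\sum_k q_k \tfrac{1-s^k}{1-s}$ is dominated by the convergent series $\sum_k k q_k$; since each term is continuous at $s=1$ with limit $k q_k$, dominated convergence (applied to the counting measure on $\mathbb{N}$) yields $\frac{1-h(s)}{1-s}\to h'(1)$ as $s\to 1$, $|s|\le 1$. Combining with the logarithm reduction from the first paragraph gives \eqref{eq:15}.

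I do not expect a serious obstacle here; the only thing to watch is that one genuinely uses only the first-moment hypothesis on $\{q_k\}$ and not a second moment, which is exactly why the domination is by $\sum_k k q_k$ rather than by a quadratic bound, and why the error term is the soft $o(1-s)$ rather than an explicit $O((1-s)^2)$. The step requiring the most care is justifying the interchange of limit and sum uniformly over the complex approach region $|s|\le 1$; the bound $|1+s+\cdots+s^{k-1}|\le k$ handles this cleanly, and the $O((1-h(s))^2)$ term coming from the logarithm is $o(1-s)$ because $1-h(s)=O(1-s)$ by the same estimate.
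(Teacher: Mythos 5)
Your proposal is correct and follows essentially the same route as the paper: reduce to the first-order expansion $1-h(s)=h'(1)(1-s)+o(1-s)$ and then apply $\log(1+x)=x+O(x^2)$, noting $1-h(s)=O(1-s)$ so the quadratic remainder is $o(1-s)$. The paper simply asserts the first-order expansion of $h$ as a consequence of $\lambda<\infty$; you supply the justification explicitly via $\frac{1-h(s)}{1-s}=\sum_k q_k(1+s+\cdots+s^{k-1})$ and dominated convergence with the bound $|1+s+\cdots+s^{k-1}|\le k$, which correctly handles the complex approach $|s|\le 1$ and uses only the first moment of $\{q_k\}$, exactly as required.
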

\begin{proof}
It follows from the condition $\lambda<\infty$ that
$$
h\left(s\right)=1+h'\left(1\right)\left(s-1\right)+o\left(s-1\right).
$$
Now the application of equality $\log \left(1+x\right)=x+O\left(x^{2}\right)$ gives us
$$
\log h\left(s\right)=\log \left(1+h'\left(1\right)\left(s-1\right)+o\left(s-1\right)\right)=
$$
$$
=h'\left(1\right)\left(s-1\right)+o\left(s-1\right)+O\left(\left(h'\left(1\right)\left(s-1\right)+o\left(s-1\right)\right)^{2} \right)
$$
which concludes the proof of the lemma.
\end{proof}
The following lemma describes the asymptotic behaviour of $F(n)$.
\begin{lemma}
\label{lemma3}
Assume that \eqref{eq:moments1} holds. Then the sequence
$L\left(n\right):=\left(n^\gamma F\left(n\right)\right)^{-1}$ is slowly varying.
\end{lemma}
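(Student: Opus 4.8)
The plan is to write
\begin{equation*}
\log L(n)=-\log F(n)-\gamma\log n=-\sum_{k=0}^{n-1}\log h\bigl(f_k(0)\bigr)-\gamma\log n
\end{equation*}
and to analyse the sum term by term. Two inputs drive the argument. The first is Lemma~\ref{lemma2}, which gives $-\log h(s)=\lambda(1-s)+o(1-s)$ as $s\to1$. The second is Kolmogorov's classical asymptotics for the survival probability of a critical Galton--Watson process with finite offspring variance $B=f''(1)$, namely $1-f_k(0)=\pr(Z_k>0)\sim 2/(Bk)$ as $k\to\infty$; this uses precisely the standing assumption $B\in(0,\infty)$. (We also note $F(n)>0$ for all $n$: by \eqref{eq:crit} together with $B>0$ one has $p_0>0$, hence $f_k(0)\in(0,1)$ for $k\ge1$, and we assume $q_0=h(0)>0$, since $L$ is otherwise undefined.)

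Combining the two inputs with the identity $\gamma=2\lambda/B$, the first step is to show
\begin{equation*}
-\log h\bigl(f_k(0)\bigr)=\lambda\bigl(1-f_k(0)\bigr)+o\bigl(1-f_k(0)\bigr)=\frac{\gamma}{k}+\frac{\varepsilon_k}{k},\qquad\varepsilon_k\to0.
\end{equation*}
Summing over $1\le k\le n-1$ and using $\sum_{k=1}^{n-1}k^{-1}=\log n+O(1)$ then yields
\begin{equation*}
\log L(n)=c+o(1)+\sum_{k=1}^{n-1}\frac{\varepsilon_k}{k}
\end{equation*}
for a suitable constant $c$ (collecting $-\log h(f_0(0))=-\log q_0$ and the Euler constant). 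It is essential that I do not claim convergence of $\sum_k\varepsilon_k/k$; only $\varepsilon_k\to0$ is needed below.

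The last step is to deduce slow variation of the sequence $L$, i.e.\ that $L(\lfloor\lambda n\rfloor)/L(n)\to1$ for every fixed $\lambda>0$. I would argue directly. Assume $\lambda>1$ (the case $\lambda<1$ is symmetric and $\lambda=1$ is trivial), put $m:=\lfloor\lambda n\rfloor$, and note $m/n\to\lambda$. From $F(n)=\prod_{k=0}^{n-1}h(f_k(0))$ we get $F(m)/F(n)=\prod_{k=n}^{m-1}h(f_k(0))$, so that
\begin{equation*}
-\log\frac{F(m)}{F(n)}=\sum_{k=n}^{m-1}\Bigl(\frac{\gamma}{k}+\frac{\varepsilon_k}{k}\Bigr)\longrightarrow\gamma\log\lambda,
\end{equation*}
because $\sum_{k=n}^{m-1}\gamma/k=\gamma\log(m/n)+o(1)\to\gamma\log\lambda$ and $\bigl|\sum_{k=n}^{m-1}\varepsilon_k/k\bigr|\le\bigl(\sup_{k\ge n}|\varepsilon_k|\bigr)\bigl(\log\lambda+1\bigr)\to0$. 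Hence
\begin{equation*}
\frac{L(m)}{L(n)}=\bigl(n/m\bigr)^{\gamma}\,\frac{F(n)}{F(m)}\longrightarrow\lambda^{-\gamma}\cdot\lambda^{\gamma}=1,
\end{equation*}
which is the claim. (Alternatively, one may quote the representation theorem for slowly varying sequences of Bojani\'c and Seneta and apply it to the displayed formula for $\log L(n)$.)

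I do not anticipate any real obstacle: the lemma is essentially a bookkeeping consequence of Lemma~\ref{lemma2} and the Kolmogorov asymptotics. The only place requiring attention is the uniform control of the $o(1/k)$ error terms over the ranges $[n,\lambda n)$, which follows at once from $\varepsilon_k\to0$ together with the elementary harmonic bound $\sum_{n\le k<m}k^{-1}\le\log(m/n)+1$.
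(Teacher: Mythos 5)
Your proof is correct and follows essentially the same route as the paper: both combine Lemma~\ref{lemma2} with the Kolmogorov asymptotics $1-f_k(0)=2/(Bk)+o(1/k)$ to write $-\log h(f_k(0))=\gamma/k+\varepsilon_k/k$ with $\varepsilon_k\to0$, and then sum. The only difference is the final step, where the paper exhibits the representation $L(n)=c\exp\{\int_1^n\tau(y)\,y^{-1}\,dy\}$ with $\tau(y)\to0$ and quotes the representation theorem, while you check $L(\lfloor\lambda n\rfloor)/L(n)\to1$ directly from the definition — an equally valid and slightly more self-contained finish (one small stylistic point: avoid reusing $\lambda$ as the scaling parameter, since the paper already reserves it for the immigration mean $h'(1)$).
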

\begin{proof}
According to the representation theorem for slowly varying functions, it suffices to represent $L\left(n\right)$ in the following form:
\begin{equation}
\label{eq:16}
 L\left(n\right)=c\exp\left\{\int_{1}^{n}\frac{\tau\left(y\right)}{y}dy \right\},
\end{equation}
where $\tau\left(y\right) \to 0$ as $y \to \infty$, and $c>0$.\\
We infer from \eqref{eq:15} that
\begin{equation}
\label{eq:17}
 \log h\left(s\right)=-h'\left(s\right)\left(1-s\right)+\left(s-1\right)\alpha\left(s\right),
\end{equation}
where $\alpha\left(s\right)\to 0$ as $s \to 1$.\\
Due to the condition $B<\infty$, 
\begin{equation}
\label{eq:f-iterations}
1-f_{j}\left(0\right)=\frac{2}{Bj}+\frac{\epsilon_{j}}{j}, 
\end{equation}
where $\epsilon_{j} \to 0$ as $j \to \infty$.
Combining that with \eqref{eq:17}, we have
$$
\begin{aligned}
n^{\gamma} F\left(n\right) & = n^{\gamma}\exp\left\{\sum_{j=0}^{n-1}\log h\left(f_{j}\left(0\right)\right) \right\}\\
& = n^{\gamma}\exp\left\{-h'\left(1\right)\sum_{j=0}^{n-1}\left(1-f_{j}\left(0\right) \right)-\sum_{j=1}^{n} \left(1-f_{j}\left(0\right) \right)\alpha\left(f_{j}\left(0\right)\right) \right\} \\
& = n^{\gamma}\exp\left\{-\frac{2h'\left(1\right)}{B}\sum_{j=1}^{n}\frac{1}{j} \right\} \exp\left\{\sum_{j=1}^{n}\frac{-h'\left(1\right)\epsilon_{j}}{j}-\frac{\alpha\left(f_{j}\left(0\right)\right)}{j} \right\}\\
& = n^{\gamma}\exp\left\{-\gamma \ln n \right\} \exp\left\{-\sum_{j=1}^{n}\frac{\left(h'\left(1\right)\epsilon_{j}+\alpha\left(f_{j}\left(0\right)\right)\right)}{j} \right\} \\
& = \exp\left\{-\sum_{j=1}^{n}\frac{\left(h'\left(1\right)\epsilon_{j}+\alpha\left(f_{j}\left(0\right)\right)\right)}{j} \right\}.
\end{aligned}
$$
It can be easily seen that the function $L\left(n\right)$ has the representation of the form \eqref{eq:16} with
$\tau\left(y\right):=h'\left(1\right)\epsilon_{n}+\alpha\left(f_{n}\left(0\right)\right)$ for $y\in\left[n-1,n\right]$. This completes the proof of Lemma \ref{lemma3}.
\end{proof}
\begin{lemma}
\label{lemma4}
Assume that \eqref{eq:moments1} holds. Then, as $k,n \to \infty$,
$$ \prod_{j=k}^{n-1} h\left(f_{j}\left(0\right) \right) \sim \left(\frac{k}{n} \right)^{\gamma} \frac{L\left( k\right)}{L\left(n\right)}.$$
\end{lemma}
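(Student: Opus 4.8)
The plan is to recognise the product in question as a ratio of consecutive values of the sequence $F$ and then read off the conclusion directly from Lemma~\ref{lemma3}, so that in the end no genuine estimation is needed at all. The first step is purely algebraic: since $F(m)=\prod_{j=0}^{m-1}h(f_j(0))$ by definition (with $F(0)=1$), the product over the block $\{k,k+1,\dots,n-1\}$ telescopes, giving
\[
\prod_{j=k}^{n-1}h\left(f_j(0)\right)=\frac{F(n)}{F(k)},\qquad 1\le k\le n .
\]
Along the way I would note that every factor appearing here is strictly positive, so that this ratio is legitimate: criticality together with $B\in(0,\infty)$ rules out $p_0=0$, hence $f_j(0)\ge f_1(0)=p_0>0$ for $j\ge 1$, and, $h$ being a non-degenerate probability generating function, $h(f_j(0))>0$; consequently $F(k)$ and $F(n)$ lie in $(0,1]$ and can be cancelled. (Only the factors with index $j\ge k\ge 1$ actually matter, since $k\to\infty$, so even the bookkeeping at $j=0$ is harmless.)

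The second, and final, step simply substitutes the content of Lemma~\ref{lemma3}. By the definition of the slowly varying function there, $L(m)=\bigl(m^{\gamma}F(m)\bigr)^{-1}$, equivalently $F(m)=m^{-\gamma}L(m)^{-1}$. Plugging this in for $m=k$ and $m=n$ gives
\[
\prod_{j=k}^{n-1}h\left(f_j(0)\right)=\frac{k^{\gamma}L(k)}{n^{\gamma}L(n)}=\left(\frac{k}{n}\right)^{\gamma}\frac{L(k)}{L(n)} ,
\]
which is in fact an exact identity for every $k\le n$ and in particular yields the asserted asymptotic equivalence as $k,n\to\infty$. So once Lemmas~\ref{lemma2}--\ref{lemma3} are in place there is literally nothing left to do; the role of Lemma~\ref{lemma3} is just to package the correction factor $L(k)/L(n)$ and to make explicit that it is slowly varying (hence that the product behaves like $(k/n)^{\gamma}$ up to slow variation, and like a constant times $(k/n)^{\gamma}$ once the extra log-moment hypotheses~\eqref{eq:moments2} hold).

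Accordingly, I do not expect a real obstacle here: the lemma is a direct corollary of the two preceding ones, and the ``$\sim$'' in the statement is merely a weakening of an identity. The only mildly delicate ingredient is actually hidden in Lemma~\ref{lemma3} — namely the use of the expansion~\eqref{eq:15} for $\log h$ together with the asymptotics~\eqref{eq:f-iterations} for $1-f_j(0)$, and the resulting absolute convergence of $\sum_j (h'(1)\epsilon_j+\alpha(f_j(0)))/j$, which is what puts $L$ into the Karamata representation~\eqref{eq:16}. Given that, the present lemma requires no further work beyond the telescoping identity and the substitution above.
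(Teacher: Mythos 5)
Your proof is correct and takes exactly the route the paper intends: the paper's proof of Lemma~\ref{lemma4} is literally ``This is an immediate consequence of Lemma~\ref{lemma3},'' and you have simply unpacked why, by telescoping the product to $F(n)/F(k)$ and substituting $F(m)=\bigl(m^{\gamma}L(m)\bigr)^{-1}$ from the definition of $L$ in Lemma~\ref{lemma3}. You are also right that this actually yields an exact identity for $1\le k\le n$, of which the stated ``$\sim$'' is a harmless weakening; the only small imprecision is your justification of positivity of the $j=0$ factor $h(f_0(0))=h(0)=q_0$ (non-degeneracy of $h$ alone does not give $q_0>0$), but this shared implicit assumption is already needed for Lemma~\ref{lemma3} itself, and, as you note, the $j=0$ term does not appear in the product in question.
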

\begin{proof}
This is an immediate consequence of Lemma \ref{lemma3}.
\end{proof}

The following statement describes the asymptotics of $L\left(n\right)$, which depends on the fulfillment of only one condition in (5).
\begin{lemma}
\label{lemma6}
The following statements hold true.
\begin{itemize}
\item[(a)] Assume that $\sum_{k=1}^\infty \left(k^{2}\log k\right) p_{k}<\infty$ and
$\sum_{k=1}^\infty \left(k\log k\right) q_{k}=\infty$. Then
\begin{equation}
\label{eq:23}
 L\left(n\right) \to +\infty\quad \text {as } n\to \infty.
\end{equation}
\item[(b)] Assume that $\sum_{k=1}^\infty \left(k^{2}\log k\right) p_{k}=\infty$ and
$\sum_{k=1}^\infty \left(k\log k\right) q_{k}<\infty$. Then
\begin{equation}
\label{eq:24}
 L\left(n\right) \to 0 \quad \text {as } n\to \infty.
\end{equation}
\end{itemize}
\end{lemma}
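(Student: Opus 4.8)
The starting point is the computation carried out in the proof of Lemma~\ref{lemma3}: it shows that, up to a factor $1+o(1)$,
$$
n^{\gamma}F(n)=c\,\exp\{-S(n)\},\qquad S(n):=\sum_{j=1}^{n}\frac{h'(1)\epsilon_{j}+\alpha(f_{j}(0))}{j},
$$
where $\epsilon_{j}$ is the error term in \eqref{eq:f-iterations} and $\alpha$ is the remainder in \eqref{eq:17}. Since Lemma~\ref{lemma3} expresses $L(n)$ through $n^{\gamma}F(n)$, the behaviour of $L$ is entirely governed by $S(n)$: $L(n)$ has a positive finite limit precisely when $S(n)$ converges, and otherwise $L(n)$ tends monotonically to $0$ or to $\infty$. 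Moreover the two pieces of $S(n)$ decouple: the partial sums of $h'(1)\epsilon_{j}/j$ involve only $\{p_{k}\}$, and those of $\alpha(f_{j}(0))/j$ involve only $\{q_{k}\}$ (entering through the sequence $u_{j}:=1-f_{j}(0)$, whose first-order asymptotics $u_{j}\sim 2/(Bj)$ are universal). Hence it is enough to prove: (i) $\sum_{j}\epsilon_{j}/j$ converges if $\sum_{k}(k^{2}\log k)p_{k}<\infty$ and tends to $+\infty$ otherwise; (ii) $\sum_{j}\alpha(f_{j}(0))/j$ converges if $\sum_{k}(k\log k)q_{k}<\infty$ and tends to a definite infinity otherwise. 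Then \eqref{eq:23} is the case where (i) contributes a bounded amount and (ii) a divergence, and \eqref{eq:24} the reverse.

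For (i) I would analyse the iteration $u_{n+1}=1-f(1-u_{n})$. Writing $1-f(1-u)=u-\tfrac{B}{2}u^{2}+\delta(u)$, one checks from \eqref{eq:crit} and \eqref{eq:moments1} that $0\le\delta(u)=o(u^{2})$, with $\delta(u)\asymp u^{3}\sum_{k\le 1/u}k^{3}p_{k}+u^{2}\sum_{k>1/u}k^{2}p_{k}$. Summing $u_{n+1}^{-1}-u_{n}^{-1}=\tfrac{B}{2}-\delta(u_{n})u_{n}^{-2}+\tfrac{B^{2}}{4}u_{n}+(\text{smaller})$ and inserting $u_{j}\sim 2/(Bj)$ gives $u_{n}^{-1}=\tfrac{Bn}{2}+O(\log n)-\sum_{j<n}\delta(u_{j})u_{j}^{-2}+O(1)$, so that $\epsilon_{j}=ju_{j}-2/B$ coincides, up to a term whose division by $j$ is summable, with $\tfrac{4}{B^{2}j}\sum_{i<j}\delta(u_{i})u_{i}^{-2}$. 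Therefore $\sum_{j}\epsilon_{j}/j$ converges iff $\sum_{j}\tfrac{1}{j}\,\delta(u_{j})u_{j}^{-2}$ converges, and interchanging the order of summation reduces the latter, up to an absolutely convergent remainder, to $\sum_{k}k^{2}p_{k}\log k$ — the factor $\log k$ arising from $\sum_{2k/B\le j\le n}1/j$. Since $\delta\ge 0$, divergence of this series forces $\epsilon_{j}>0$ eventually, hence $\sum_{j}\epsilon_{j}/j\to+\infty$.

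For (ii) I would sharpen Lemma~\ref{lemma2}. From $(1-u)^{k}=1-ku+\rho_{k}(u)$, $\rho_{k}(u):=(1-u)^{k}-1+ku\ge 0$, one has $h(1-u)=1-h'(1)u+R(u)$ with $R(u):=\sum_{k}q_{k}\rho_{k}(u)\ge 0$, $R(u)=o(u)$, and $\rho_{k}(u)\asymp\min(k^{2}u^{2},ku)$; consequently $\log h(1-u)=-h'(1)u+R(u)+O(u^{2})$ and $\alpha(f_{j}(0))=-R(u_{j})/u_{j}+O(u_{j})$. Hence $\sum_{j}\alpha(f_{j}(0))/j=-\sum_{j}R(u_{j})/(ju_{j})+O(1)$, and, using $u_{j}\sim 2/(Bj)$ and interchanging the order of summation, $\sum_{j}R(u_{j})/(ju_{j})$ reduces, up to an absolutely convergent remainder, to $\sum_{k}kq_{k}\log k=\sum_{k}(k\log k)q_{k}$, the $\log k$ now coming from $\sum_{1\le j\le 2k/B}1/j$. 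Nonnegativity of $\rho_{k}$ again makes the divergence monotone, with a definite sign.

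The part I expect to be most delicate is making the two interchanges of summation rigorous: one needs \emph{two-sided}, scale-uniform estimates of $\delta(u)$ and of $R(u)=\sum_{k}q_{k}\rho_{k}(u)$ — genuine lower bounds, not only the easy upper bounds — together with careful treatment of the transitional range $k\asymp 1/u$ where the two regimes in the estimates for $\delta$ and for $\rho_{k}$ meet. A secondary point is self-consistency of the iteration analysis: the correction terms in the expansion of $u_{n}^{-1}$ are themselves estimated using $u_{j}\sim 2/(Bj)$, which is legitimate only because they are of order strictly smaller than the main term $Bn/2$. Finally one must track all signs — both the precise relation between $L(n)$ and $S(n)$ and the directions of the two divergences — in order to land on \eqref{eq:23} and \eqref{eq:24} rather than their opposites.
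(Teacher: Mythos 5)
Your decomposition into an offspring-driven piece $\sum_j \epsilon_j/j$ and an immigration-driven piece $\sum_j \alpha(f_j(0))/j$, and the reduction of each to the corresponding logarithmic-moment condition by interchanging sums, is exactly the paper's strategy (the paper writes the immigration piece via the identity \eqref{eq:25}, whose right-hand side is precisely your $R(u)$, and handles the offspring piece via the NV06 $\delta$-expansion and KNS Lemma~8 rather than re-deriving the iteration estimates; it also cites Mellein's Lemma~2.1 for the convergence of the immigration piece instead of proving the two-sided bounds on $R(u)$ you worry about). So the route is the same and the technical worries you list are, in the paper, outsourced to references.

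However, the ``sign tracking'' you defer to the end is not a loose end but the crux. Carry it out: from Lemma~\ref{lemma3}, $L(n)=(n^\gamma F(n))^{-1}\asymp\exp\{S(n)\}$, and your own expansion gives $\alpha(f_j(0))=-R(u_j)/u_j+O(u_j)$ with $R(u_j)\geq 0$. Under the hypotheses of (a), $\sum_j \epsilon_j/j$ stays bounded and $\sum_j \alpha(f_j(0))/j\to-\infty$, so $S(n)\to-\infty$ and hence $L(n)\to 0$, the opposite of \eqref{eq:23}; symmetrically, under (b) you get $S(n)\to+\infty$ and $L(n)\to+\infty$, the opposite of \eqref{eq:24}. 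This is not a defect peculiar to your writeup: the paper's proof of Lemma~\ref{lemma6} has the same inversion, because after \eqref{eq:decomp} it asserts ``$F(n)=n^{-\gamma}L(n)$'', which is the reciprocal of the definition $L(n)=(n^\gamma F(n))^{-1}$ fixed in Lemma~\ref{lemma3} and used in Lemma~\ref{lemma4} and Theorem~\ref{thm:main}. Read consistently with that definition, the argument proves Lemma~\ref{lemma6} with (a) and (b) swapped. You should resolve this sign convention explicitly before investing effort in rigorizing the interchanges of summation, or you will reproduce exactly this discrepancy.
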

\begin{proof}
First, observe that
\begin{equation}
\label{eq:25}
 1-h\left(s\right)= \left(1-s\right) h'\left(1\right)-\left(1-s\right)^{2} \sum_{j=0}^{\infty}s^{j} \sum_{i=j+1}^{\infty}R_{i},
\end{equation}
where
$ R_{j}=\sum_{k=j+1}^{\infty}q_{k}. $

Using the expansion $\log(1+x)=x+O\left(x^{2}\right)$ and noting that
$h\left(f_{j}\left(0\right)-1\right)^{2} \le Cj^{-2}$ for any process satisfying \eqref{eq:moments1}, we conclude that
\begin{align}
\label{eq:F1}
\nonumber
F\left(n \right)
& =\exp\left\{\sum_{j=0}^{n-1}\log h\left(f_{j}\left(0\right)\right) \right\} \\
\nonumber
& = \exp\left\{\sum_{j=0}^{n-1}\left(h\left(f_{j}\left(0\right) \right)-1 \right)+O\left(h\left(f_{j}\left(0\right)\right)-1 \right)^{2} \right\} \\
&  \sim C_{0} \exp\left\{-\sum_{j=1}^{n-1} \left(1-h\left(f_{j}\left(0\right)\right) \right) \right\}.
\end{align}
Applying now the representation \eqref{eq:25}, we have
\begin{align}
\label{eq:decomp}
\nonumber
& \sum_{j=1}^{n-1}\left(1-h\left(f_{j}\left(0\right)\right) \right) \\
&\hspace{1cm}=h'(1)\sum_{j=1}^{n-1}(1-f_{j}(0))
- \sum_{j=1}^{n-1}\left(1-f_{j}\left(0\right)\right)^{2} \sum_{l=0}^{\infty}f_{j}^{l}
\left(0\right) \sum_{i=l+1}^{\infty}R_{i}.
\end{align}
If $\sum_{k\ge1}\left(k^{2}\log k\right)p_{k}$ is finite then, due to Lemma~8 in \cite{KestenNeySpitzer67},
$$
\sum_{j=1}^{n-1}(1-f_{j}(0))
=\sum_{j=1}^{n-1}\frac{2}{Bj}+C+o\left(1\right)
\quad\text{as }n\to\infty.
$$
This implies that
$$
F\left(n\right)=n^{-\gamma}L\left(n\right),
$$
where
$$
L\left(n\right) = C_{1}\exp\left\{\sum_{j=1}^{n-1}\left(1-f_{j}\left(0\right)\right)^{2} \sum_{l=0}^{\infty}f_{j}^{l}
\left(0\right) \sum_{i=l+1}^{\infty}R_{i}\right\}.
$$
It is now obvious that \eqref{eq:23} is equivalent to
\begin{equation}
\label{eq:26}
\lim_{n\to\infty}\sum_{j=1}^{n-1}\left(1-f_{j}\left(0\right)\right)^{2} \sum_{l=0}^{\infty}f_{j}^{l}
\left(0\right) \sum_{i=l+1}^{\infty}R_{i} = \infty.
\end{equation}
Set $\overline{R_{l}}:=\sum_{i=l+1}^{\infty}R_{i}$.
Using \eqref{eq:f-iterations} and interchanging the order of summation, we get
$$
\begin{aligned}
& \sum_{j=1}^{n-1}\left(1-f_{j}\left(0\right)\right)^{2} \sum_{l=0}^{\infty}f_{j}^{l}
\left(0\right) \sum_{i=l+1}^{\infty}R_{i} \\
& \geq c^{2} \sum_{j=1}^{n-1}\frac{1}{j^{2}}\sum_{l=1}^{j}\left(1-\frac{c}{j} \right)^{l}\overline{R_{l}}
\ge c_{1} \sum_{l=1}^{n-1} \overline{R_{l}} \sum_{j=l}^{n-1}\frac{1}{j^{2}} \\
& \geq \frac{c_{1}}{4} \sum_{l=1}^{n/2} \overline{R_{l}} \frac{1}{l}
\ge\frac{c_{1}}{4} \sum_{l=1}^{n/2}\frac{1}{l}\sum_{i=l+1}^{n/2} R_{i} \\
&= \frac{c_{1}}{4} \sum_{i=l+1}^{n/2 } R_{i} \sum_{l=1}^{i-1}\frac{1}{l} \geq c_{2} \sum_{i=2}^{n/2} R_{i}\log i
\to \infty,
\end{aligned}
$$
where in the last step we used that $\sum_{k=1}^{\infty}\left(k \log k\right)q_{k}= \infty$. This proves \eqref{eq:26} and thus \eqref{eq:23}.

We now turn to \eqref{eq:24}.
If $\sum_{k\ge 1}\left(k\log k\right)q_{k} $ is finite then, using the arguments from the proof of Lemma 2.1 in \cite{Mellein82}, we have
\begin{align*}
\sum_{j=1}^{\infty}\left(1-f_{j}\left(0\right)\right)^{2} \sum_{l=0}^{\infty}f_{j}^{l}
\left(0\right) \sum_{i=l+1}^{\infty}R_{i}
<\infty.
\end{align*}
Combining this with \eqref{eq:F1} and with \eqref{eq:decomp}, we establish
\begin{align}
\label{eq:F2}
F(n)\sim C_{2} \exp\left\{-h'(1)\sum_{j=1}^{n-1}\left(1-f_{j}(0)\right)\right\}
\end{align}

It is known, see Lemma 3 in \cite{NV06}, that
$$
1+\frac{Bn}{2}-\frac{1}{1-f_{n}\left(0\right)}=\sum_{k=0}^{n-1}\delta\left(f_{k}\left(0\right)\right)=o\left(n\right),
$$
where
$$
\delta\left(s\right)= \frac{\varepsilon\left(s\right)-\alpha\left(1-s \right)\left(\alpha-\varepsilon\left(s\right) \right)}
{1-\left(1-s \right)\left(\alpha-\varepsilon\left(s\right) \right)}, \ \ \alpha=\frac{f''\left(1\right)}{2},
$$
$$
\varepsilon\left(s\right)=\sum_{u=3}^{\infty}p_{u}\sum_{j=2}^{k-1}\sum_{v=1}^{j-1}\left(1-s^{v}\right), \ \ 0 \leq s <1.
$$
Thus,
\begin{align*}
1-f_{n}\left(0\right)
&= \left(1+\frac{Bn}{2}-\sum_{k=0}^{n-1}\delta\left(f_{k}\left(0\right)\right) \right)^{-1}\\
&= \left(1+\frac{Bn}{2} \right)^{-1}+\frac{1+o(1)}{\left(1+\frac{Bn}{2} \right)^{2}}\sum_{k=0}^{n-1}\delta\left(f_{k}\left(0\right)\right).
\end{align*}
Consequently,
\begin{align*}
h'(1)\sum_{j=1}^{n-1}(1-f_{j}(0))
&=h'(1)\sum_{j=1}^{n-1}\left(1+\frac{Bj}{2} \right)^{-1}
+\sum_{j=1}^{n-1}\frac{1+o(1)}{\left(1+\frac{Bj}{2} \right)^{2}}\sum_{k=0}^{j-1}\delta\left(f_{k}\left(0\right)\right)\\
&\ge \gamma\log n-C+c\sum_{j=1}^{n-1}\frac{1}{j^{2}}\sum_{k=0}^{j-1}\delta\left(f_{k}\left(0\right) \right).
\end{align*}
Combining this with \eqref{eq:F2}, we see that the convergence $L\left(n\right)\to 0$ will be proven if we show that the latter sum grows unboundedly. We first notice that
$$
\sum_{j=1}^{n-1}\frac{1}{j^{2}}\sum_{k=0}^{j-1}\delta\left(f_{k}\left(0\right) \right)
= \sum_{k=0}^{n-1}\delta\left(f_{k}\left(0\right) \right)\sum_{j=k+1}^{n-1}\frac{1}{j^{2}}
\geq \frac{1}{2}\sum_{k=1}^{n/2}\frac{1}{k}\delta\left(f_{k}\left(0\right) \right).
$$
It is immediate from the definition of $\delta\left(s\right)$ that
$$
\delta\left(s\right) \geq \varepsilon\left(s\right)-\alpha\left(1-s \right)\left(\alpha-\varepsilon\left(s\right) \right).
$$
Thus, we need to show that
\begin{equation}
\label{eq:28}
\sum_{k=1}^{\infty}\frac{\varepsilon\left(f_{k}\left(0\right)\right)}{k} = \infty.
\end{equation}
Changing the order of summation in the definition of $\varepsilon(s)$, we have
\begin{align*}
\varepsilon\left(f_{k}\left(0\right)\right)
& = \sum_{u=3}^{\infty}p_{u}\sum_{j=2}^{u-1}\sum_{v=1}^{j-1}\left(1-f_{k}^{v}\left(0\right)\right) \\
& =\sum_{j=2}^{\infty}\sum_{u=j+1}^{\infty}p_{u}\sum_{v=1}^{j-1}\left(1-f_{k}^{v}\left(0\right)\right)
=\sum_{j=2}^{\infty}K_{j}\sum_{v=1}^{j-1}\left(1-f_{k}^{v}\left(0\right)\right),
\end{align*}
where $K_{j}=\sum_{u=j+1}^{\infty}p_{u}$. Interchanging the order of the remaining two sums, we finally get
\begin{align*}
\varepsilon\left(f_{k}\left(0\right)\right)
=\sum_{v=1}^{\infty}\left(1-f_{k}^{v}\left(0\right)\right)\sum_{j=v+1}^{\infty}K_{j}
= \sum_{v=1}^{\infty}\left(1-f_{k}^{v}\left(0\right)\right)\overline{K_{v}},
\end{align*}
where $\overline{K_{v}}=\sum_{j=v+1}^{\infty}K_{j}$.
This representation implies that
\begin{align*}
 \sum_{k=1}^\infty\frac{1}{k}\varepsilon\left(f_{k}\left(0\right)\right)
& = \sum_{k=1}^\infty\frac{1}{k}
\sum_{v=1}^{\infty}\overline{K_{v}}\left(1-f_{k}^{v}\left(0\right)\right) \\
& \geq \sum_{k=1}^{\infty}\frac{1}{k}\sum_{v=1}^{k}\left(1-f_{k}^{v}\left(0\right)\right)\overline{K_{v}}
=\sum_{v=1}^{\infty} \overline{K_{v}}\sum_{k=1}^{v}\frac{1-f_{k}^{v}\left(0\right)}{k}.
\end{align*}
Noting that $f_{k}\left(0\right)$ is increasing and using
\eqref{eq:f-iterations}, we obtain
$$
\sum_{k=1}^{v}\frac{1-f_{k}^{v}\left(0\right)}{k}
\ge \left(1-f_{v}^{v}\left(0\right)\right)\sum_{k=1}^{v}\frac{1}{k}
\ge c\log v.
$$
Therefore,
$$
\sum_{k=1}^\infty\frac{1}{k}\varepsilon\left(f_{k}\left(0\right)\right)
\ge c \sum_{v=1}^{\infty} \overline{K_{v}}\log v.
$$
Now it remains to notice that the assumption
$\sum_{k=1}^{\infty} \left(k^{2}\log k\right)p_{k}=\infty$ is equivalent to
$\sum_{v=1}^{\infty} \overline{K_{v}}\log v=\infty$.
This concludes the proof of \eqref{eq:28}, which, in its turn, implies \eqref{eq:24}.
\end{proof}

To investigate the asymptotic behavior of lower deviations for $Y_{n}$, we need the following lemma, which deals with lower and upper bounds for $\pr\left(Y_{n}\leq k\right)$.
\begin{lemma}
\label{lemma5}
Assume that \eqref{eq:moments1} holds. Then there exist positive constants $c_{1}$ and $c_{2}$ such that
\begin{equation}
\label{eq:19}
c_{1}\prod_{j=k}^{n-1}h\left(f_{j}\left(0\right) \right) \leq \pr\left(Y_{n} \leq k\right) \leq c_{2}\prod_{j=k}^{n-1}h\left(f_{j}\left(0\right) \right)
\end{equation}
for all $1\le k\le n$.
\end{lemma}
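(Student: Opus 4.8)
Recall that $\prod_{j=k}^{n-1}h(f_j(0))=F(n)/F(k)$, and that by \eqref{eq:theta-prob1} this is exactly $\pr(\theta_n>n-k)$. Both bounds in \eqref{eq:19} are obtained by relating $\{Y_n\le k\}$ to the last immigration class that is still alive at time $n$, via the decomposition \eqref{eq:repre}. The elementary but crucial observation is the following description of the conditional law: if $\theta_n=l$ and $m:=n-l$, then on $\{\theta_n=l\}$ one has $Y_n=Z_m^{(l)}+\sum_{i>l}Z_{n-i}^{(i)}$, the three ingredients $\{Z_{n-i}^{(i)}=0,\ i<l\}$, $Z_m^{(l)}$, and $\sum_{i>l}Z_{n-i}^{(i)}$ are independent, the middle one has the law of $Z_m^{(1)}$, and the last one has the law of $Y_m$. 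Consequently, by \eqref{eq:theta-prob2},
\[
\pr(Y_n\le k,\theta_n=l)=\pr(\theta_n=l)\,\pr\bigl(\widehat Z_m+Y_m'\le k\bigr),
\]
where $\widehat Z_m$ denotes $Z_m^{(1)}$ conditioned on $Z_m^{(1)}>0$, and $Y_m'$ is an independent copy of $Y_n$ with $n$ replaced by $m<n$.

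For the \emph{lower bound}, fix a large constant $A$ depending only on $\lambda=h'(1)$ and put $l:=\lfloor k/A\rfloor$ (assuming $k\ge A$; the finitely many remaining values follow from $\pr(Y_n\le k)\ge\pr(Y_n=0)=F(n)\ge F(\lceil A\rceil)\,F(n)/F(k)$). On $\{\theta_n>n-l\}$ one has $Y_n=\sum_{i=n-l+1}^{n}Z_{n-i}^{(i)}$, a sum of $l$ independent terms each of mean $\lambda$ which is independent of $\{\theta_n>n-l\}$; hence $\e[Y_n;\theta_n>n-l]=l\lambda\,\pr(\theta_n>n-l)$, and Markov's inequality gives $\pr(Y_n\le k,\theta_n>n-l)\ge(1-l\lambda/k)\pr(\theta_n>n-l)\ge\tfrac12\pr(\theta_n>n-l)$ once $A\ge 2\lambda$. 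Finally $\pr(\theta_n>n-l)=F(n)/F(l)$, and since $L$ is slowly varying by Lemma~\ref{lemma3} and $k/l$ is bounded, $F(n)/F(l)\ge c\,F(n)/F(k)$ for a constant $c>0$. This proves the left inequality in \eqref{eq:19}.

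For the \emph{upper bound} write $\pr(Y_n\le k)=\pr(Y_n\le k,\theta_n>n-k)+\sum_{l=1}^{n-k}\pr(Y_n\le k,\theta_n=l)$. The first term is at most $\pr(\theta_n>n-k)=\prod_{j=k}^{n-1}h(f_j(0))$, which is the whole right-hand side, so it remains to show that the sum is of the same order. I would argue by induction on $n$: assuming the right inequality of \eqref{eq:19} for all values smaller than $n$, one has $\pr(Y_m'\le r)\le c_2F(m)/F(r)$ for $m=n-l<n$, and combining this with the elementary concentration estimate $\sup_{i\ge1}\pr(Z_m^{(1)}=i)\le C m^{-2}$ (valid under \eqref{eq:moments1}, and giving $\sup_{i\ge1}\pr(\widehat Z_m=i)\le C'm^{-1}$ since $1-h(f_m(0))\asymp\gamma/m$) one gets
\[
\pr\bigl(\widehat Z_m+Y_m'\le k\bigr)=\sum_{i=1}^{k}\pr(\widehat Z_m=i)\,\pr(Y_m'\le k-i)
\ \le\ \frac{C'c_2F(m)}{m}\sum_{r=0}^{k-1}\frac1{F(r)}\ \le\ \frac{C''c_2F(m)}{m}\cdot\frac{k^{\gamma+1}}{L(k)},
\]
where the last step uses $\sum_{r\le k}1/F(r)=\sum_{r\le k}r^{\gamma}/L(r)\asymp k^{\gamma+1}/L(k)$ (Karamata, $L$ slowly varying). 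Since $\pr(\theta_n=l)\asymp(\gamma/m)F(n)/F(m)$ and $F(m)=m^{-\gamma}L(m)$, this yields $\pr(Y_n\le k,\theta_n=l)\le c_2C'''\,F(n)\,k^{\gamma+1}L(k)^{-1}m^{-2}$; summing over $m=k,\dots,n-1$ and using $\sum_{m\ge k}m^{-2}\asymp k^{-1}$ gives exactly $c_2C'''\,F(n)k^{\gamma}/L(k)=c_2C'''F(n)/F(k)$, which closes the induction once the absolute constant $C'''$ is brought below $1$ (if it is not a priori, one first proves a weaker a priori bound and bootstraps). An alternative, induction-free route uses the Chernoff bound: from $Y_n\ge\sum_{j=k}^{n-1}Z_j^{(n-j)}$ one gets $\pr(Y_n\le k)\le e\prod_{j=k}^{n-1}h(f_j(e^{-1/k}))$, and it then suffices to show $\sum_{j=k}^{n-1}\bigl(f_j(e^{-1/k})-f_j(0)\bigr)=O(1)$ uniformly in $1\le k\le n$, which follows from Lemma~\ref{lemma2} together with Kolmogorov–Kesten–Ney–Spitzer type estimates for the iterates $f_j$.

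The main obstacle is the upper bound. The naive bounds $\pr(\widehat Z_m+Y_m'\le k)\le\pr(\widehat Z_m\le k)$ and $\le\pr(Y_m'\le k)$ each lose a factor $\log(n/k)$ after the summation over $l$ (equivalently, the harmless-looking sum $\sum_m 1/m$ diverges logarithmically), so one must genuinely exploit the independence of $\widehat Z_m$ and $Y_m'$ through the convolution and a sharp concentration estimate for $\widehat Z_m$; it is precisely this that converts $\sum_m m^{-1}$ into $\sum_m m^{-2}\asymp k^{-1}$ and reproduces the slowly varying factor $L(k)^{-1}$ with the correct power of $k/n$. The Chernoff route trades this for the analytic problem of controlling $\sum_{j=k}^{n-1}(f_j(e^{-1/k})-f_j(0))$ uniformly in $k$, which under only \eqref{eq:moments1} is itself somewhat delicate.
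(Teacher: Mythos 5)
Your lower bound is correct: conditioning on $\{\theta_n>n-l\}$ with $l\asymp k$, computing the mean of the surviving part, and applying Markov does give a constant multiple of $F(n)/F(k)$ (using Lemma~\ref{lemma3} to absorb $F(k)/F(l)$). The paper's own route is shorter and avoids Lemma~\ref{lemma3}: it conditions on $\{\theta_n>n-k\}$ directly, on which event $Y_n$ is distributed as $Y_k$, and then bounds $\pr(Y_k\le k)\ge c$ uniformly by the Seneta limit theorem \eqref{eq:gamma-limit}; the factor $\pr(\theta_n>n-k)=\prod_{j=k}^{n-1}h(f_j(0))$ comes out for free.

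The upper bound is where the gap lies, and you correctly identify it yourself without resolving it. Your induction on $n$ yields a recursion of the form $c_2\le 1+C'''c_2$, which closes only if $C'''<1$; none of the constants in that chain ($\gamma$ from $1-h(f_m(0))\sim\gamma/m$, the concentration constant for $\widehat Z_m$, the Karamata constant) is under control, and the ``weaker a priori bound and bootstrap'' you invoke does not in fact help, since $C'''\ge1$ just propagates $c_2(n)\le 1+C'''c_2(n-1)$ to exponential growth. Your second route — the Chernoff bound on $\sum_{j\ge k}Z_j^{(n-j)}$ at $s=e^{-1/k}$ — is essentially the paper's idea, but it strands you with the uniform estimate $\sum_{j=k}^{n-1}\bigl(f_j(e^{-1/k})-f_j(0)\bigr)=O(1)$, which you only gesture at. The paper sidesteps that entirely by choosing $s=f_k(0)$ instead of $e^{-1/k}$: the semigroup identity $f_j(f_k(0))=f_{j+k}(0)$ then gives $H_n(f_k(0))=\prod_{j=k}^{n+k-1}h(f_j(0))$, and the surplus factors $\prod_{j=n}^{n+k-1}h(f_j(0))\le1$ can simply be dropped. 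Combined with $(f_k(0))^{-k}\le c_3^{-1}$ (from \eqref{eq:f-iterations}) this yields the upper bound in one line, with no induction and no extra analytic input. That specific choice of tilting point is the idea your proposal is missing.
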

\begin{proof}
We first prove the upper bound in \eqref{eq:19}.
By the Markov inequality,
$$
\pr\left(Y_{n} \leq k\right)
=\pr\left(s^{Y_{n}} \geq s^{k}\right)
\leq \frac{H_{n}\left(s\right)}{s^{k}},
\quad s\in(0,1].
$$
Letting $s=f_{k}\left(0\right)$, we get
$$
\pr\left(Y_{n} \leq k\right) \leq \frac{H_{n}\left(f_{k}\left(0\right)\right)}{\left(f_{k}\left(0\right)\right)^{k}}.
$$
Notice that \eqref{eq:f-iterations} implies that
$ \left(f_{k}\left(0\right)\right)^{k}\ge c_{3}>0$.
Therefore,
$$
\pr\left(Y_{n} \leq k\right) \leq \frac{H_{n}\left(f_{k}\left(0\right)\right)}{c_3}.
$$
Observe that
\begin{equation}
\label{eq:20}
 H_{n}\left(f_{k}\left(0\right)\right)= \prod_{j=0}^{n-1} h\left(f_{j}\left(f_{k}\left(0\right)\right) \right)=
\prod_{j=k}^{n+k-1} h\left(f_{j}\left(0\right) \right).
\end{equation}
To obtain the upper bound in \eqref{eq:19} it remains to notice that 
\begin{equation*}
\prod_{j=k}^{n+k-1} h\left(f_{j}\left(0\right)\right)
=
\prod_{j=k}^{n-1} h\left(f_{j}\left(0\right)\right)
\prod_{j=n}^{n+k-1} h\left(f_{j}\left(0\right)\right)
\le \prod_{j=k}^{n-1} h\left(f_{j}\left(0\right)\right).
\end{equation*}

To prove the lower bound in \eqref{eq:19}, we notice that if all immigrants from the first $n-k$ generations have no descendants at time $n$ then $Y_{n}$ has the same distribution as $Y_{k}$. Therefore,
$$ \pr\left(Y_{n} \leq k\right) \geq  \pr\left(Y_{n-k}=0\right)\pr\left(Y_{k} \leq k \right). $$
Due to the limit theorem \eqref{eq:gamma-limit}, there exists a positive number $c$ such that
$$ \pr\left(Y_{k} \leq k \right)\geq c>0 $$
uniformly in $k$.
Furthermore, it is easy to see that
$$
\pr\left(Y_{n-k}=0\right)
=\prod_{i=1}^{n-k}\pr \left(Z_{n-i}^{\left(i\right)}=0 \right)
= \prod_{j=k}^{n-1} h\left(f_{j}\left(0\right) \right).
$$
Thus, the lower bound is also proven.
\end{proof}
 \section{Estimates for concentration functions.}
The aim of this section is to provide some estimates for local probabilities of random variables $Y_{n}$ and $Z^{(i)}_{n-i}$. 

We begin with bounds for the concentration function
of $Y_n$.
\begin{lemma}
\label{lemma7}
For every $\varepsilon>0$, there exists $N=N\left(\varepsilon\right)$ such that for all $\left|s\right|\leq 1$,
$$
\left|H_{n}\left(s\right)\right| \leq \exp \left\{-\left(h'\left(1\right)-B\varepsilon\right) \sum_{j=N}^{n-1} \operatorname{Re}\left(1-f_{j}\left(s\right)\right)+\varepsilon \sum_{j=N}^{n-1}\left|\operatorname{Im}\left(1-f_{j}(s)\right)\right|\right\} .
$$
\end{lemma}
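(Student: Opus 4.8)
The plan is to exploit the factorisation
$$
|H_n(s)|=\prod_{j=0}^{n-1}\bigl|h(f_j(s))\bigr|\le\prod_{j=N}^{n-1}\bigl|h(f_j(s))\bigr|,
$$
where the inequality uses that $|f_j(s)|\le1$ and hence $|h(f_j(s))|\le1$ for every $j$ and every $|s|\le1$, so the first $N$ factors may be discarded. It then suffices to produce, for a suitable $N=N(\varepsilon)$, the pointwise bound
$$
|h(w)|\le\exp\bigl\{-(h'(1)-B\varepsilon)\operatorname{Re}(1-w)+\varepsilon\,|\operatorname{Im}(1-w)|\bigr\}
$$
for every value $w=f_j(s)$ with $j\ge N$ and $|s|\le1$, and to multiply these bounds over $j=N,\dots,n-1$. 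All such $w$ lie in the closed unit disc, and --- this is the crux --- they are all close to $1$ provided $N$ is large.

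The pointwise bound near $1$ is routine. Since $\log x\le x-1$ and, by non-negativity of the coefficients, $|h(w)|\le h(|w|)$, one has
$$
\log|h(w)|\le|h(w)|-1\le h(|w|)-1=-\sum_{k\ge1}q_k\bigl(1-|w|^k\bigr).
$$
Because $h'(1)=\lambda<\infty$ by \eqref{eq:moments1}, dominated convergence gives $\sum_{k\ge1}q_k(1-t^k)=(h'(1)+o(1))(1-t)$ as $t\uparrow1$, so for $|w|$ sufficiently close to $1$ the right-hand side is at most $-(h'(1)-\varepsilon_1)(1-|w|)$. Writing $w=1-u$ and using $|w|^2=1-2\operatorname{Re}u+|u|^2$ one obtains $1-|w|\ge\operatorname{Re}u-\tfrac12|u|^2$, where $\operatorname{Re}u=\operatorname{Re}(1-w)\ge0$ since $|w|\le1$, while $|u|^2\le\rho_2(\operatorname{Re}u+|\operatorname{Im}u|)$ once $|u|\le\rho_2$. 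Combining these estimates and choosing $\varepsilon_1>0$ and $\rho_2>0$ small enough, depending on $\varepsilon$, $B$ and $h'(1)$, yields the displayed pointwise bound for all $|w|\le1$ with $|1-w|\le\rho$, for an appropriate $\rho=\rho(\varepsilon)>0$.

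The main obstacle is the second ingredient: for every $\rho>0$ there is $N$ such that $\sup_{|s|\le1}|1-f_j(s)|\le\rho$ for all $j\ge N$. By the chain rule $f_n'(w)=\prod_{l=0}^{n-1}f'(f_l(w))$, and each factor has modulus at most $\sum_k kp_k=1$ on the closed disc, so $f_n$ is $1$-Lipschitz there; in particular $|1-f_n(s)|=|f_n(1)-f_n(s)|\le|1-s|$, which already handles all $s$ with $|1-s|\le\rho$. On the closed disc the only points with $|f(s)|=1$ are the finitely many $d$-th roots of unity, $d$ being the span of $\{p_k\}$ (its support lies in $d\Z$, as $p_0>0$ for a non-degenerate critical offspring law); at such a root of unity $\zeta$ one has $f(\zeta)=1$, hence $f_j(\zeta)=1$ for $j\ge1$, and the $1$-Lipschitz bound gives $|1-f_j(s)|\le|\zeta-s|$ near $\zeta$. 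On the compact set $K$ obtained from $\{|s|\le1\}$ by deleting discs of radius $\le\rho$ around $1$ and around these roots of unity we have $\sup_{s\in K}|f(s)|<1$, so $f(K)$ is a compact subset of the open unit disc; since $\{f_n\}$ is uniformly bounded and $f_n\to1$ on $[0,1)$ (because $f_n(0)\le f_n(s)\le1$ and $1-f_n(0)=2/(Bn)+o(1/n)\to0$ by \eqref{eq:f-iterations}), a limit point of $[0,1)$ lies in the open disc and the Vitali--Porter theorem gives $f_n\to1$ uniformly on compact subsets of the open disc; hence $\sup_{s\in K}|1-f_j(s)|=\sup_{s\in K}|1-f_{j-1}(f(s))|\to0$. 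Taking $N$ large enough then yields $\sup_{|s|\le1}|1-f_j(s)|\le\rho$ for all $j\ge N$.

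Putting $N=N(\rho(\varepsilon))$, every $w=f_j(s)$ with $j\ge N$ and $|s|\le1$ satisfies $|w|\le1$ and $|1-w|\le\rho$, so the pointwise bound of the second paragraph applies to it; multiplying it over $j=N,\dots,n-1$ in $|H_n(s)|\le\prod_{j=N}^{n-1}|h(f_j(s))|$ gives precisely the asserted inequality. The factor $B$ in the coefficient $h'(1)-B\varepsilon$ plays no role in the argument --- any fixed positive multiple of $\varepsilon$ would do --- and is written this way only to streamline the later use of the lemma together with $1-f_j(0)\sim 2/(Bj)$.
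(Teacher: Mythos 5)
Your proof is correct, and it follows the same overall skeleton as the paper's argument: discard the first $N$ factors using $|h(f_j(s))|\le1$, establish a pointwise estimate for $\log|h(w)|$ valid once $w$ is near $1$, and invoke $\sup_{|s|\le1}|1-f_j(s)|\to0$ to make $w=f_j(s)$ near $1$ for all $j\ge N$. Where you differ from the paper is in how the pointwise estimate is obtained, and in how much of the uniformity claim you actually justify. The paper works with the complex expansion $\log h(w)=-h'(1)(1-w)+(w-1)\alpha(w)$ from its Lemma~\ref{lemma2}, splits $\alpha$ into real and imaginary parts, and uses $\operatorname{Re}(1-f_j(s))\ge0$; the coefficients $h'(1)-|\operatorname{Re}\alpha|$ and $|\operatorname{Im}\alpha|$ then read off directly. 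You instead bound $\log|h(w)|\le|h(w)|-1\le h(|w|)-1$ and translate $1-|w|$ back into $\operatorname{Re}(1-w)$ and $|\operatorname{Im}(1-w)|$ via $1-|w|\ge\operatorname{Re}(1-w)-\tfrac12|1-w|^2$ together with $|1-w|^2\le|1-w|\bigl(\operatorname{Re}(1-w)+|\operatorname{Im}(1-w)|\bigr)$. This is more elementary --- it never touches the complex logarithm --- at the cost of the extra geometric step. Both arguments rest on the uniform convergence of $f_j$ to $1$ on the closed disc, which the paper merely states; you supply a complete proof, correctly handling the possibly periodic offspring law (where $f_j\equiv1$ at the $d$-th roots of unity for $j\ge1$, and the $1$-Lipschitz bound $|f_j'|\le1$ controls neighbourhoods of these points), with a normal-family argument on the remaining compactum. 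Your closing remark that the factor $B$ in $h'(1)-B\varepsilon$ is cosmetic is also accurate. In short: same strategy, a more hands-on pointwise estimate, and a gap in the paper's exposition filled in.
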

\begin{proof}
The proof is the same as the proof of Theorem in \cite{NV06}. For readers convenience and completeness, we provide the proof below.

Since $h\left(f_{j}\left(s\right)\right) \leq 1$, we obtain
\begin{equation}
\label{eq:eqH}
\left|H_{n}\left(s\right)\right| \leq \left|\prod_{j=N}^{n-1}h\left(f_{j}\left(s\right)\right) \right|=  \exp\left\{\sum_{j=N}^{n-1}\mathrm{Re} \log h\left(f_{j}\left(s\right)\right)\right\}.
\end{equation}
In view of \eqref{eq:17}, for $\left|s\right|\leq 1$ we have
\begin{align}
\label{eq:Re}
\nonumber
\mathrm{Re}\log h\left(f_{j}\left(s\right)\right)
& \leq -\left(h'\left(1\right)-\left|\mathrm{Re}\alpha\left(f_{j}\left(s\right)\right) \right| \right)\mathrm{Re}\left(1-f_{j}\left(s\right) \right) \\
& + \left|\mathrm{Im}\alpha\left(f_{j}\left(s\right) \right) \right| \left|\mathrm{Im}\left(1-f_{j}\left(s\right) \right)\right|.
\end{align}
Using the fact that $f_{j}\left(s\right)\to 1$ uniformly in $s$ from the unity disk, one can choose $N=N\left(\varepsilon\right)$ such that for all $j \geq N$,
$$
\left|\mathrm{Re}\alpha\left(f_{j}\left(s\right)\right)\right| \leq \varepsilon B, \ \
\left|\mathrm{Im}\alpha\left(f_{j}\left(s\right)\right) \right|\leq \varepsilon.
$$
Substituting the above bounds into \eqref{eq:Re} and recalling \eqref{eq:eqH}, we arrive at the conclusion of the lemma.
\end{proof}
\begin{lemma}
\label{lemma8}
Assume that \eqref{eq:moments1} holds. Then for each
$\varepsilon \in (0,2\gamma)$ there exists a constant
$c>0$ such that, for any $n \geq 1$ and $\left|t\right| \leq \pi / 2$,
$$
\left|H_n\left(e^{\mathrm{i}t}\right)\right| \leq c\left(n|t|\right)^{-2\gamma+\varepsilon}.
$$
In particular, there exists a constant $c^{*}$ such that, for every $n\ge1$ and all $\left|t\right| \leq \pi / 2$,
$$
\left|H_n\left(e^{\mathrm{i}t}\right)\right| \leq c^*\left(n|t|\right)^{-\theta},
$$
where $\theta=\min\{2\gamma,1/2\}$.
\end{lemma}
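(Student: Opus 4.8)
The approach is the one used in \cite{NV06} for critical Galton--Watson processes, the only difference being that here the characteristic function is the product $H_n(e^{\mathrm{i}t})=\prod_{k=0}^{n-1}h\bigl(f_k(e^{\mathrm{i}t})\bigr)$, for which Lemma~\ref{lemma7} already supplies the key exponential bound. I would first set aside a trivial range: since $|H_n(e^{\mathrm{i}t})|\le1$ and $\varepsilon<2\gamma$, for any fixed $M\ge1$ one has $(n|t|)^{-2\gamma+\varepsilon}\ge M^{-(2\gamma-\varepsilon)}$ whenever $n|t|\le M$, so the claim holds on $\{n|t|\le M\}$ as soon as $c\ge M^{2\gamma-\varepsilon}$. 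Thus it remains to treat $n|t|>M$, and by choosing $M$ large we may in addition assume $n>N(\varepsilon)$, the threshold furnished by Lemma~\ref{lemma7}; throughout $0<|t|\le\pi/2$.

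On this range I would invoke Lemma~\ref{lemma7}, which reduces everything to two estimates, uniform in $n$ and in $0<|t|\le\pi/2$: a lower bound $\sum_{j=N}^{n-1}\operatorname{Re}\bigl(1-f_j(e^{\mathrm{i}t})\bigr)\ge c_0\log(n|t|)-C_1$ with $c_0>0$, and an upper bound $\sum_{j=N}^{n-1}\bigl|\operatorname{Im}\bigl(1-f_j(e^{\mathrm{i}t})\bigr)\bigr|\le C_2$. Both follow from a linear--fractional description of the iterates at the complex point $e^{\mathrm{i}t}$. Extending \eqref{eq:f-iterations} (equivalently Lemma~3 of \cite{NV06}) from $[0,1)$ to the unit circle, one has $\bigl(1-f_j(s)\bigr)^{-1}=(1-s)^{-1}+\tfrac{B}{2}\,j+o(j)$ uniformly over $|s|\le1$; and since $\operatorname{Re}(1-e^{\mathrm{i}t})^{-1}=\tfrac12$ while $\operatorname{Im}(1-e^{\mathrm{i}t})^{-1}=\tfrac12\cot(t/2)$ lies in $(0,|t|^{-1}]$, inverting the reciprocal yields
\[
\operatorname{Re}\bigl(1-f_j(e^{\mathrm{i}t})\bigr)\asymp\frac{1+Bj}{(1+Bj)^2+\cot^2(t/2)},\qquad
\bigl|\operatorname{Im}\bigl(1-f_j(e^{\mathrm{i}t})\bigr)\bigr|\asymp\frac{\cot(t/2)}{(1+Bj)^2+\cot^2(t/2)}.
\]
Splitting both sums at the balance point $j\asymp1/|t|$ (where the two terms in the denominator are comparable) finishes the computation: the range $j\lesssim1/|t|$ contributes $O(1)$ in each case, while for $j\gtrsim1/|t|$ the real summand is $\asymp1/j$, so its partial sums grow like $\log(n|t|)$, and the imaginary summand is $\asymp|t|^{-1}j^{-2}$, so its sum is $O(1)$. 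Substituting these two bounds into Lemma~\ref{lemma7} and taking the $\varepsilon$ there small (which fixes $N$, hence $C_1,C_2$) gives the asserted decay $|H_n(e^{\mathrm{i}t})|\le c\,(n|t|)^{-2\gamma+\varepsilon}$. The ``in particular'' statement follows by choosing $\varepsilon$ so that $2\gamma-\varepsilon=\theta$ (admissible when $2\gamma>\theta=1/2$) and, in the remaining case $2\gamma\le1/2$, by using any admissible $\varepsilon$ together with $|H_n|\le1$.

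The one genuine obstacle is the uniformity in $t$ of the complex iteration expansion $\bigl(1-f_j(e^{\mathrm{i}t})\bigr)^{-1}=(1-e^{\mathrm{i}t})^{-1}+\tfrac{B}{2}j+o(j)$. The classical expansions of $1-f_j$ are normally stated for real $s\in[0,1)$, whereas $(1-e^{\mathrm{i}t})^{-1}$ is now complex of modulus $\asymp|t|^{-1}$, which is large for small $|t|$; one must check that the error is $o(j)$ uniformly over all of $|t|\le\pi/2$, not just for each fixed $t$. This goes through exactly as in the real case treated in \cite{NV06,KestenNeySpitzer67}: because $f_j(e^{\mathrm{i}t})\to1$ and in fact $|1-f_j(e^{\mathrm{i}t})|\le C/j$ uniformly in $t$, the per-step corrections $\delta(f_j(e^{\mathrm{i}t}))$ tend to $0$ uniformly in $t$, so their partial sums are $o(j)$ uniformly; with that in hand, the two sum estimates and hence the lemma follow from the elementary splitting above.
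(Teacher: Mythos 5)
Your approach mirrors the paper's: dispose of the trivial range $n|t|\le M$ via $|H_n|\le1$, then combine Lemma~\ref{lemma7} with a logarithmic lower bound on $\sum_j\operatorname{Re}\bigl(1-f_j(e^{\mathrm{i}t})\bigr)$ and a control on $\sum_j\bigl|\operatorname{Im}\bigl(1-f_j(e^{\mathrm{i}t})\bigr)\bigr|$. The gap is in your claimed uniform bound $\sum_{j\ge N}\bigl|\operatorname{Im}\bigl(1-f_j(e^{\mathrm{i}t})\bigr)\bigr|\le C_2$. The paper, following Lemma~8 of~\cite{NV06}, establishes only the weaker estimate in \eqref{eq:eqforIm}, namely $\sum_{j\ge N}\bigl|\operatorname{Im}\bigl(1-f_j(e^{\mathrm{i}t})\bigr)\bigr|\le\frac{2\varepsilon}{B}\log\bigl(1+(Bn/2+1)^2\tan^2(t/2)\bigr)+b$ --- a bound that grows logarithmically with a small but strictly positive prefactor. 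Your $O(1)$ claim rests on the formula $\bigl|\operatorname{Im}(1-f_j)\bigr|\asymp\cot(t/2)\bigl[(1+Bj)^2+\cot^2(t/2)\bigr]^{-1}$, which is obtained by dropping the $o(j)$ error in the expansion $\bigl(1-f_j(e^{\mathrm{i}t})\bigr)^{-1}=(1-e^{\mathrm{i}t})^{-1}+\tfrac{B}{2}j+\text{error}$. That error is complex; its imaginary part is only $o(j)$, and for $j\gg1/|t|$ it can dominate the fixed term $\tfrac12\cot(t/2)$, in which case $\bigl|\operatorname{Im}(1-f_j)\bigr|$ decays like $\varepsilon'/j$ rather than $\asymp|t|^{-1}j^{-2}$, and summing over $j$ up to $n$ produces an $\varepsilon'\log(n|t|)$ term. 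Your closing paragraph acknowledges the issue but dismisses it: knowing the error is $o(j)$ uniformly in $t$ does not give a uniformly bounded imaginary sum. The argument is salvageable --- because Lemma~\ref{lemma7} already carries an $\varepsilon$-slack, the additional $\varepsilon\log(n|t|)$ in the imaginary part merely perturbs the exponent by a further $O(\varepsilon)$, which is precisely the bookkeeping the paper performs via \eqref{eq:psi}--\eqref{eq:eqforIm} --- but as written your estimate~(2) is false. A secondary point: you leave the constant $c_0$ in the real-part bound unspecified except for positivity. That is not enough. Producing the specific exponent $-2\gamma+\varepsilon$, rather than merely ``some negative power,'' requires the explicit coefficient in front of the logarithm, which the paper extracts by matching $\operatorname{Re}(1-f_j)$ against the function $\psi$ and invoking the quantitative inequality \eqref{eq:psi} from \cite{NV06}.
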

\begin{proof}
Consider the the same function $\psi=\psi\left(x\right)$ defined as in \cite{NV06}:
$$ \psi\left(x\right)=\frac{2\left(Bx+1\right)}{\left(Bx+1\right)^{2}+c^{2}}, \ \ x \geq 0, $$
where $c$ is a real number.
Replacing $B$ with $B/2$ in $\psi$, it is immediate from Lemma 7 in \cite{NV06} that
\begin{equation}
\label{eq:psi}
\left|\sum_{k=0}^{n-1}\psi\left(k\right)-\frac{2}{B}\log\left(1+\frac{\left(Bn/2+1\right)^{2}}{c^{2}} \right) \right| \leq 1.
\end{equation}
Further, repeating almost verbatim the arguments used in the proof of Lemma 8 from \cite{NV06}, one may check that for any $\varepsilon>0$, there exist $N$ and constants $a=a\left(\varepsilon,N \right)$ and $b=b\left(\varepsilon,N \right)$ such that for all $\left|t \right|\leq \pi/2$,
\begin{equation}
\label{eq:eqforRe}
\sum_{j=N}^{n-1}\mathrm{Re}\left(1-f_{j}\left(e^{\mathrm{i}t}\right) \right)\geq \frac{2\left(1-\varepsilon \right)}{B}\log\left(1+\left(Bn/2+1 \right)^{2}\tan^{2}\frac{t}{2}\right)-a,
\end{equation}
\begin{equation}
\label{eq:eqforIm}
\sum_{j=N}^{n-1}\left|\mathrm{Im}\left(1-f_{j}\left(e^{\mathrm{i}t}\right) \right)\right| \leq \frac{2\varepsilon}{B}\log\left(1+\left(Bn/2+1 \right)^{2}\tan^{2}\frac{t}{2}\right)+b.
\end{equation}
Using Lemma 8 and applying then \eqref{eq:psi}--\eqref{eq:eqforIm} and 
$\left|x\right|\leq \left|\tan x\right|$, we conclude that
$$
\left|H_{n}\left(e^{\mathrm{i}t}\right)\right| \leq c\left(\varepsilon \right)\left(n\left|t\right|\right)^{-2\left(\left(\gamma-2\varepsilon \right)\left(1-\varepsilon\right)-\frac{2\varepsilon^{2}}{B}\right)}.
$$
This gives the desired estimate.
\end{proof}
The next lemma is crucial for the proof of Theorem \ref{thm:main}.
\begin{lemma}
\label{lemma9}
Assume that \eqref{eq:moments1} holds. Then, for all $n,m \geq 1$,
$$
\pr\left(Y_{n}=m \right) \leq \frac{c}{m}.
$$
\end{lemma}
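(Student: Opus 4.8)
The statement $\pr(Y_n=m)\le c/m$ is a pointwise local bound, uniform in $n$ and $m$, and the natural route is Fourier inversion combined with the concentration bound from Lemma~\ref{lemma8}. Write
\[
\pr(Y_n=m)=\frac{1}{2\pi}\int_{-\pi}^{\pi}H_n\left(e^{\mathrm{i}t}\right)e^{-\mathrm{i}mt}\,dt
\le\frac{1}{2\pi}\int_{-\pi}^{\pi}\left|H_n\left(e^{\mathrm{i}t}\right)\right|dt,
\]
so it suffices to show $\int_{-\pi}^{\pi}|H_n(e^{\mathrm{i}t})|\,dt\le c/m$. This cannot be true as stated for the whole range of $t$ — the integral of $|H_n|$ over a fixed neighbourhood of $0$ is of order $1/n$, not $1/m$ — so the argument must exploit that $m\le n$ is the relevant regime (for $m>n$ one has $\pr(Y_n=m)$ even smaller, but in fact we only need the bound for $m\le$ something like $2n$ since for larger $m$ one can use a cruder tail estimate, or simply note the claim for $m\ge n$ follows from the $m=n$-type bound). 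The key point is that we should not integrate $|H_n|$ globally against $1$; instead we should use the oscillation of $e^{-\mathrm{i}mt}$, or rather the following cleaner device.

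First I would reduce to the inequality $\pr(Y_n\le m)\le c m\,\pr(Y_n=m)$ would be the wrong direction; instead use monotonicity the other way is also not available. The right tool here is: for a nonnegative integer random variable, $\pr(Y_n=m)\le \pr(Y_n\in[m,2m])$ trivially, and one bounds $\pr(Y_n\in[m,2m])$ via Lemma~\ref{lemma8}. Concretely, pick a smooth or piecewise-linear function that dominates $\mathbf 1_{[m,2m]}$ and whose Fourier transform is well-controlled; the block $[m,2m]$ has length $m$, so its indicator's Fourier transform decays like $\min(m,1/|t|)$, giving
\[
\pr(Y_n\in[m,2m])\le C\int_{-\pi}^{\pi}\left|H_n\left(e^{\mathrm{i}t}\right)\right|\min\!\left(m,\tfrac{1}{|t|}\right)dt.
\]
Now split at $|t|=1/m$. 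On $|t|\le 1/m$ use the factor $m$ and $|H_n(e^{\mathrm{i}t})|\le c^*(n|t|)^{-\theta}$ with $\theta=\min\{2\gamma,1/2\}<1$, so the contribution is $\le cm\int_0^{1/m}(n t)^{-\theta}dt=cm\cdot n^{-\theta}m^{\theta-1}=c(n/m)^{-\theta}\le c$, since $m\le n$; then divide by... wait — this gives a bound on $\pr(Y_n\in[m,2m])$ of order $O(1)$, which is useless. The resolution: one actually needs $\theta>1$, i.e. the better exponent $2\gamma$ when $\gamma>1/2$, and a separate argument when $\gamma\le 1/2$.

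Given that subtlety, the cleanest plan is the following two-regime argument. \textbf{Regime $\gamma>1/2$:} choose $\varepsilon$ small in Lemma~\ref{lemma8} so that $2\gamma-\varepsilon>1$; then $\int_{|t|\le 1/m}|H_n(e^{\mathrm{i}t})|\,m\,dt\le cm\int_0^{1/m}(nt)^{-2\gamma+\varepsilon}dt = c m\, n^{-2\gamma+\varepsilon}\,m^{2\gamma-\varepsilon-1}$, and on $|t|>1/m$, $\int_{1/m}^{\pi}|H_n(e^{\mathrm{i}t})|\,|t|^{-1}dt\le c n^{-2\gamma+\varepsilon}\int_{1/m}^{\pi}t^{-2\gamma+\varepsilon-1}dt\le c n^{-2\gamma+\varepsilon} m^{2\gamma-\varepsilon}$ up to constants; both are $\le c(n/m)^{-(2\gamma-\varepsilon)}(1/m)\le c/m$ because $n\ge m$ and the exponent is positive. \textbf{Regime $\gamma\le 1/2$:} here the Fourier bound alone is too weak, and I would instead combine it with Lemma~\ref{lemma5}: $\pr(Y_n=m)\le\pr(Y_n\le m)\le c_2\prod_{j=m}^{n-1}h(f_j(0))$, and then apply Lemma~\ref{lemma4}, which gives $\prod_{j=m}^{n-1}h(f_j(0))\sim (m/n)^\gamma L(m)/L(n)$. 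But this is not $O(1/m)$ either — in fact for $\gamma<1$ it can be much larger than $1/m$ when $m\ll n$. So Lemma~\ref{lemma5} is the wrong auxiliary; rather, in the regime $\gamma\le1/2$ one must use the \emph{Markov-type immigration structure}: condition on $\theta_n$. From \eqref{eq:repre}, $Y_n=\sum_{i=1}^n Z^{(i)}_{n-i}$, and conditionally on $\theta_n=\ell$, $Y_n=Z^{(\ell)}_{n-\ell}+\sum_{i>\ell}Z^{(i)}_{n-i}$ with $Z^{(\ell)}_{n-\ell}$ a critical GW process conditioned to survive $n-\ell$ generations; by the Yaglom local limit theorem for GW processes (Lemma 3 / the local limit theorem in \cite{NV06}) one has $\pr(Z^{(\ell)}_{n-\ell}=r\mid Z^{(\ell)}_{n-\ell}>0)\le c/(n-\ell)$ for all $r$, and convolving with an independent variable keeps the bound $\le c/(n-\ell)$; summing over $\ell$ against $\pr(\theta_n=\ell)$ from \eqref{eq:theta-prob2} and $F(n-\ell)\asymp (n-\ell)^{-\gamma}L(n-\ell)$ then telescopes to $O(1/n)\le O(1/m)$ when $m\le n$, and for $m>n$ we use $\pr(Y_n=m)\le\pr(Y_n\ge m)\le \e Y_n/m=\lambda n/m\cdot$ ... no, $\e Y_n\asymp n$ so that only gives $O(n/m)$; instead for $m\ge n$ use that $\pr(Y_n=m)$ is bounded by the same conditioning argument since $n-\ell\le n\le m$ makes $c/(n-\ell)$ worse, so one really wants $\pr(Y_n=m)\le c/n$ for ALL $m$ — which is a cleaner target — and then $c/n\le c/m$ fails for $m>n$; hence for $m>n$ one separately bounds $\pr(Y_n=m)$ via a union bound over which $Z^{(i)}$ are large, using $\pr(Z^{(i)}_{n-i}=r)$ decay. \textbf{Main obstacle.} The real difficulty is the small-$\gamma$ case: the Fourier bound of Lemma~\ref{lemma8} degrades and does not by itself yield $1/m$; one must bring in the branching/immigration decomposition and an explicit GW local limit theorem (Yaglom-type), and carefully handle $m>n$. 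I expect the cleanest writeup actually proves $\pr(Y_n=m)\le c/\min(n,m)$ first via the $\theta_n$-conditioning plus the GW local limit theorem of \cite{NV06}, uniformly in both parameters, and then $\le c/m$ for $m\le n$ is immediate while for $m>n$ one uses that among $Y_n=\sum_i Z^{(i)}_{n-i}$ at least one summand exceeds $m/n$, giving a summable tail bound of order $o(1/m)$; reconciling these two pieces without losing uniformity is the part that needs the most care.
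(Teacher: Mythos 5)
Your proposal takes a genuinely different route from the paper, and unfortunately both branches of it have gaps that prevent them from closing.

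\paragraph{What the paper actually does.} The paper's proof is short and uniform in $m$ because it differentiates the generating function before applying the concentration-function inequality. If $H_n$ is the PGF of $Y_n$, then $H_n'(s)/H_n'(1)$ is itself a PGF, namely of a random variable $\xi$ with $\pr(\xi=k-1)=k\pr(Y_n=k)/\e Y_n$; applying the standard concentration bound \eqref{eq:con.f.bound} to $\xi$ yields directly
\[
\sup_{k} k\,\pr(Y_n=k)\le c\int_{-\pi/2}^{\pi/2}\bigl|H_n'(e^{\mathrm{i}t})\bigr|\,\mathrm{d}t,
\]
so the factor of $m$ appears automatically, with no block-of-length-$m$ trick and no case distinction on whether $m\le n$ or $m>n$. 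One then expands $H_n'(s)=H_n(s)\sum_{l}\frac{h'(f_l(s))}{h(f_l(s))}f_l'(s)$ and integrates each term, using $|f_l'(e^{\mathrm{i}t})|\le\min\bigl(1,c(l|t|)^{-3/2}\bigr)$ together with Lemma~\ref{lemma8}; each $l\ge1$ contributes $O(l^{\theta-1}/n^\theta)$ with $\theta<1$, and the sum over $l\le n$ is $O(1)$, while $|t|<1/n$ is handled by the crude bound $|H_n'|\le h'(1)n$. The point you are missing is precisely this device of working with $H_n'$ rather than $H_n$: it is what makes the exponent requirements harmless, so no separate treatment of $\gamma\le1/2$ is needed.

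\paragraph{Why your two routes fail.} Your first route bounds $\pr(Y_n=m)$ by $\pr(Y_n\in[m,2m])$ and then estimates the block probability via a Fourier majorant $\min(m,1/|t|)$. This loses a full factor of $m$: for $m\asymp n$ one has $\pr(Y_n\in[m,2m])\asymp m/n\asymp1$, while $\pr(Y_n=m)\asymp 1/n$. Your computation asserting that both pieces are $\le c(n/m)^{-(2\gamma-\varepsilon)}/m$ is not correct; carried out carefully (splitting at $|t|=1/n$ as well, since the bound of Lemma~\ref{lemma8} is informative only for $n|t|\gtrsim1$) the two pieces come out as $O(m/n)$ and $O((m/n)^{2\gamma-\varepsilon})$ respectively, which are $O(1)$ but not $O(1/m)$. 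So even in the regime $\gamma>1/2$ this approach does not give the claimed bound. Your second route, conditioning on $\theta_n$ and using Lemma~\ref{lemma10} (or Lemma~\ref{lemma10'}), does yield $\pr(Y_n=m)\le c/n$ for all $m$ after summing against $\pr(\theta_n=\ell)$, which implies the desired bound when $m\le n$; but, as you yourself observe, the regime $m>n$ is not covered, and the patches you sketch (tail bounds via $\e Y_n$, or ``at least one $Z^{(i)}$ exceeds $m/n$'') are not carried to completion. Since Lemma~\ref{lemma9} is stated for all $m,n\ge1$, the proposal as written has a genuine gap precisely there. The paper's derivative trick makes the entire issue disappear.
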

\begin{proof}
We make use of the following bound for the concentration function (see, for example, \cite{NV06}):
\begin{equation}
\label{eq:con.f.bound}
\sup_{x}\pr\left(\xi = x \right) \leq \left(\frac{96}{95}\right)^{2}\frac{1}{a}\int_{-a}^{a}\left|\varphi\left(t\right)\right|\mathrm{d}t,
\end{equation}
where $\varphi\left(t\right)$ is the characteristic function of the random variable $\xi$, and $a>0$.\\
Now let $\xi$ be a random variable with the generating function $H_{n}^{\prime}\left(s\right) / H_{n}^{\prime}\left(1\right)$. Choosing $a=\pi/2$, we conclude that there exists an absolute constant $c$ such that for any $n \geq 1$,
\begin{equation}
\label{eq:29}
\sup_{k} k \pr\left(Y_{n}=k\right) \leq c \int_{-\pi/2}^{\pi/2}\left|H_{n}^{\prime}\left(\mathrm{e}^{\mathrm{i} t}\right)\right|\mathrm{d}t.
\end{equation}
Observe that for $|s|\le 1$ one has
$$
H_{n}^{\prime}\left(s\right)=\sum_{j=0}^{n-1} h^{\prime}\left(f_{j}\left(s\right)\right) f_{j}^{\prime}\left(s\right) \prod_{l\neq j} h\left(f_{l}\left(s\right)\right) = H_{n}\left(s\right)\sum_{l=0}^{n-1}\frac{h^{\prime}
\left(f_{l}\left(s\right)\right)}
{h\left(f_{l}\left(s\right)\right)} f_{l}^{\prime}\left(s\right).
$$

We shall estimate integrals of $\left|H_{n}(e^{\mathrm{i}t})\frac{h^{\prime}
\left(f_{l}\left(e^{\mathrm{i}t}\right)\right)}
{h\left(f_{l}\left(e^{\mathrm{i}t}\right)\right)} f_{l}^{\prime}\left(e^{\mathrm{i}t}\right)\right|$ separately.
If $l=0$ then $f_{l}'(s)\equiv 1$ and the function
$\frac{h'(f_{l}(s))}{h(f_{l}(s))}$ is bounded. Estimating $|H_{n}\left(s\right)|$ by Lemma~\ref{lemma8}, we then get
\begin{align}
\label{eq:l=0}
\int_{1/n<|t|<\pi/2}\left|H_{n}\left(e^{\mathrm{i}t}\right)\frac{h^{\prime}\left(e^{\mathrm{i}t}\right)}
{h\left(e^{\mathrm{i}t}\right)} \right|\mathrm{d}t
\le C.
\end{align}

Assume now that $l\ge1$. Since
$\left|f'_{l}\left(e^{\mathrm{i}t}\right)\right| \leq 1$
and since $\frac{h^{\prime}
\left(f_{l}\left(e^{\mathrm{i}t}\right)\right)}
{h\left(f_{l}\left(e^{\mathrm{i}t}\right)\right)}$ is bounded, we get, applying Lemma~\ref{lemma8},
\begin{align}
\label{eq:integral1}
\nonumber
&\int_{1 / n<|t|< 1 / l}\left|H_{n}\left(e^{\mathrm{i}t}\right)\frac{h^{\prime}
\left(f_{l}\left(e^{\mathrm{i}t}\right)\right)}
{h\left(f_{l}\left(e^{\mathrm{i}t}\right)\right)} f_{l}^{\prime}\left(e^{\mathrm{i}t}\right)\right|\mathrm{d}t\\ &\hspace{2cm}\leq c^*\int_{1/n<|t|< 1/l}\left(n|t|\right)^{-\theta}\mathrm{d}t
\leq \frac{c^*}{1-\theta}
\frac{l^{\theta-1}}{n^{\theta}}.
\end{align}
For $|t|\ge1/l$, we use additionally the inequality
$$
\left|f'_{l}\left(e^{\mathrm{i}t}\right)\right| \leq \frac{c}{\left(l\left|t \right|\right)^{3/2}},
$$
which was derived in the proof of Lemma~9 from \cite{NV06}. As a result we have
\begin{align}
\label{eq:integral2}
\nonumber
&\int_{1 /l<|t|<\pi/2}\left|H_{n}\left(e^{\mathrm{i}t}\right)\frac{h^{\prime}
\left(f_{l}\left(e^{\mathrm{i}t}\right)\right)}
{h\left(f_{l}\left(e^{\mathrm{i}t}\right)\right)} f_{l}^{\prime}\left(e^{\mathrm{i}t}\right)\right|\mathrm{d}t\\ &\hspace{2cm}\leq c_1\int_{1 /l<|t|<\pi/2}(n|t|)^{-\theta}(l|t|)^{-3/2}\mathrm{d}t
\leq c_2
\frac{l^{\theta-1}}{n^{\theta}}.
\end{align}
Summing over $l$ the estimates in \eqref{eq:integral1} and \eqref{eq:integral2} and taking into account \eqref{eq:l=0}, we obtain
\begin{align*}
\int_{1/n<|t|<\pi/2}\left|H'_n(e^{\mathrm{i}t})\right|\mathrm{d}t
\le c.
\end{align*}
Using the elementary bound
$$
\left|H'_{n}\left(e^{\mathrm{i}t}\right)\right|\le h'(1)n,
$$
we get
\begin{align*}
\int_{|t|<1/n}\left|H'_{n}\left(e^{\mathrm{i}t}\right)\right|\mathrm{d}t
\le 2h'(1).
\end{align*}
Consequently,
$$
\int_{-\pi/2}^{\pi/2}\left|H_{n}^{\prime}\left(\mathrm{e}^{\mathrm{i} t}\right)\right|\mathrm{d}t\le c.
$$
Plugging this into \eqref{eq:29}, we complete the proof of the lemma.
\end{proof}
For the proof of our main result, we also need a concentration function bound for the Galton-Watson process $Z^{(l)}$, which starts with a random number of particles.
\begin{lemma}
\label{lemma10}
For all $1\le l<n$ one has
$$
\max_{k\ge1}\pr\left(Z^{(l)}_{n-l}=k\right)
\leq \frac{c}{(n-l)^{2}}.
$$
\end{lemma}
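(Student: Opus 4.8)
The plan is to reduce the assertion to a local limit bound for an \emph{ordinary} critical Galton--Watson process and then to quote the classical local limit theorem for such processes.

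Write $m:=n-l\ (\ge1)$. Recall that $Z^{(l)}_{m}$ is produced by starting from $Z^{(l)}_{0}$ particles, whose number has distribution $\{q_j\}$, and letting each of them generate an independent critical Galton--Watson process with offspring law $\{p_k\}$; thus $Z^{(l)}_{m}\stackrel{d}{=}\sum_{i=1}^{Z^{(l)}_{0}}W^{(i)}$, where $W^{(1)},W^{(2)},\dots$ are independent copies of the number $W_m$ of particles at generation $m$ of a critical Galton--Watson process with offspring law $\{p_k\}$ started from one particle, so that $\e[s^{W_m}]=f_m(s)$. The first step is an elementary union bound: for every $k\ge1$ and every $j\ge1$,
$$
\pr\Bigl(\sum_{i=1}^{j}W^{(i)}=k\Bigr)\le j\,\sup_{r\ge1}\pr\bigl(W_m=r\bigr),
$$
because on the event $\{\sum_{i=1}^{j}W^{(i)}=k\}$ at least one of the nonnegative summands is positive, and, conditioning on the value of one fixed positive summand and using independence, $\pr\bigl(W^{(i)}\ge1,\ \sum_{l}W^{(l)}=k\bigr)\le\sup_{r\ge1}\pr(W_m=r)$. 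Averaging over $\{q_j\}$ and using $\sum_{j\ge1}jq_j=\lambda<\infty$ then gives $\max_{k\ge1}\pr(Z^{(l)}_{n-l}=k)\le\lambda\,\sup_{r\ge1}\pr(W_{m}=r)$, so it suffices to prove $\sup_{r\ge1}\pr(W_m=r)\le c\,m^{-2}$ (for bounded $m$ this is trivial after enlarging $c$).

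For $r\ge m$ I would repeat the argument of Lemma~\ref{lemma9}: applying the concentration function bound \eqref{eq:con.f.bound} to the size-biased law of $W_m$, whose characteristic function has modulus $|f_m'(\mathrm e^{\mathrm it})|$ since $f_m'(1)=1$, gives $\sup_{r}r\,\pr(W_m=r)\le c\int_{-\pi/2}^{\pi/2}|f_m'(\mathrm e^{\mathrm it})|\,dt$. Splitting the integral at $|t|=1/m$, using $|f_m'(\mathrm e^{\mathrm it})|\le f_m'(1)=1$ on $|t|\le1/m$ and the estimate $|f_m'(\mathrm e^{\mathrm it})|\le c(m|t|)^{-3/2}$ for $|t|\ge1/m$ from the proof of Lemma~9 in~\cite{NV06}, one obtains $\int_{-\pi/2}^{\pi/2}|f_m'(\mathrm e^{\mathrm it})|\,dt\le c/m$. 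Hence $\pr(W_m=r)\le c/(rm)$ for all $r\ge1$, which is $\le c/m^2$ once $r\ge m$.

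For the remaining range $1\le r<m$ the modulus-based machinery of this paper is not sharp enough: it only sees $|f_m(\mathrm e^{\mathrm it})|$ and ignores the oscillation of the characteristic function, which is exactly what spreads the mass of $W_m$ over $\asymp m$ values, so the concentration bound applied after conditioning on $\{W_m>0\}$ does not produce the required decay. Instead I would invoke the local limit theorem for critical Galton--Watson processes with finite variance (Yaglom-type; see~\cite{KestenNeySpitzer67} and~\cite{NV06}), which gives $\pr(W_m=r)\le c\,m^{-2}$ uniformly for $1\le r<m$ (in the aperiodic case $m^{2}\pr(W_m=r)\to 4B^{-2}\mathrm e^{-2r/(Bm)}$ uniformly for $r/m$ bounded; in the periodic case the same bound holds, with $r$ restricted to the appropriate coset). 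Combining this with the previous paragraph yields $\sup_{r\ge1}\pr(W_m=r)\le c\,m^{-2}$ for all $m$, and hence the claim.

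The main obstacle is precisely the range $1\le r<m$: the bound $\pr(W_m=r)\le c\,m^{-2}$ there is a genuine second-order smallness, reflecting the product of $\pr(W_m>0)\asymp m^{-1}$ and the conditional local probability $\asymp m^{-1}$, and it cannot be extracted from the characteristic-function/concentration-function estimates used elsewhere in the paper; here we simply quote the classical local limit theorem. Everything else --- the distributional reduction, the union bound, and the estimate for $r\ge m$ --- is routine.
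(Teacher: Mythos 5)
Your proposal is correct and, at bottom, rests on the same key input as the paper: the uniform bound $\max_{r\ge1}\pr\left(Z_{m}=r\mid Z_0=1\right)\le c/m^{2}$ for a critical Galton--Watson process with finite offspring variance, proved in \cite{NV06}. The paper simply quotes this bound, pushes it to $Z_0=j$ by the same union-bound/linearity-in-the-starting-population observation you make, and averages over $\{q_j\}$ using $\lambda<\infty$. Two remarks on your version. First, the split into $r\ge m$ (size-biased concentration function) and $1\le r<m$ (local limit theorem) is unnecessary: the \cite{NV06} bound already holds uniformly over \emph{all} $r\ge1$, so your concentration-function work for $r\ge m$ just re-derives a weaker version of a fact you end up quoting anyway. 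Your remark that the $r<m$ range ``cannot be extracted from characteristic-function estimates'' is also not quite right --- \cite{NV06} obtains it precisely by concentration-function methods applied to $Z_m$ conditioned on survival, i.e.\ to a different characteristic function, rather than to the unconditional one used elsewhere in this paper. Second, \cite{KestenNeySpitzer67} requires the $\sum k^2\log k\, p_k<\infty$ moment, so for the finite-variance case the correct reference is \cite{NV06} alone. With these trimmings your argument coincides with the paper's proof.
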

\begin{proof}
Let $Z_{n}$ denotes a critical Galton-Watson process. It has been shown in \cite{NV06} that
$$
\max_{k\ge1}\pr\left(Z_{n}=k|Z_{0}=1\right)\le\frac{c}{(n+1)^2}.
$$
This easily implies that
$$
\max_{k\ge1}\pr\left(Z_{n}=k|Z_{0}=j\right)\le\frac{cj}{(n+1)^{2}}
$$
for all $j\ge1$.
Therefore,
\begin{align*}
\pr\left(Z^{(l)}_{n-l}=k\right)
&=\sum_{j=1}^\infty q_{j}\pr(Z_{n-l-1}=k|Z_0=j)\\
&\le\frac{c}{(n-l)^2}\sum_{j=1}^\infty jq_{j}.
\end{align*}
Since the right-hand side does not depend on $k$ and since the expectation of $\{q_{j}\}$ is finite, we obtain the desired bound.
\end{proof}
\begin{lemma}
\label{lemma10'}
For all $1\le l<n$ one has
$$
\pr\left(Z^{(l)}_{n-l}=k\right) \le \frac{c}{k(n-l)}.
$$
\end{lemma}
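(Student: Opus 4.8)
The plan is to follow the proof of Lemma~\ref{lemma9}, exploiting the fact that $Z^{(l)}_{n-l}$ has a far simpler generating function than $Y_n$. Since $Z^{(l)}_{n-l}$ is a critical Galton--Watson process run for $n-l$ generations starting from an independent number of particles with distribution $\{q_k\}$, its generating function is $G(s):=\e[s^{Z^{(l)}_{n-l}}]=h\left(f_{n-l}(s)\right)$, whence $G'(s)=h'\left(f_{n-l}(s)\right)f'_{n-l}(s)$ and $G'(1)=\lambda$. Applying the concentration function inequality \eqref{eq:con.f.bound} with $a=\pi/2$ to a random variable with generating function $G'(s)/G'(1)$, precisely as in the derivation of \eqref{eq:29}, one gets
$$
\sup_{k\ge1}k\,\pr\left(Z^{(l)}_{n-l}=k\right)\le c\int_{-\pi/2}^{\pi/2}\left|G'\left(e^{\mathrm{i}t}\right)\right|\mathrm{d}t,
$$
so the lemma is reduced to showing that $\int_{-\pi/2}^{\pi/2}\left|G'\left(e^{\mathrm{i}t}\right)\right|\mathrm{d}t\le c/(n-l)$.

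To bound that integral I would use $|h'(s)|\le h'(1)=\lambda$ for $|s|\le1$, which gives $\left|G'\left(e^{\mathrm{i}t}\right)\right|\le\lambda\left|f'_{n-l}\left(e^{\mathrm{i}t}\right)\right|$; it therefore suffices to prove $\int_{-\pi/2}^{\pi/2}\left|f'_{m}\left(e^{\mathrm{i}t}\right)\right|\mathrm{d}t\le c/m$ uniformly in $m\ge1$. On $\{|t|\le1/m\}$ the trivial bound $\left|f'_{m}\left(e^{\mathrm{i}t}\right)\right|\le f'_{m}(1)=1$ gives a contribution of at most $2/m$; on $\{1/m<|t|<\pi/2\}$ I would use the estimate $\left|f'_{m}\left(e^{\mathrm{i}t}\right)\right|\le c\left(m|t|\right)^{-3/2}$ from the proof of Lemma~9 in \cite{NV06} (the same bound already used in the proof of Lemma~\ref{lemma9}), and $\int_{1/m}^{\pi/2}(mt)^{-3/2}\mathrm{d}t\le 2/m$. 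Adding the two contributions yields $\int_{-\pi/2}^{\pi/2}\left|f'_{m}\left(e^{\mathrm{i}t}\right)\right|\mathrm{d}t\le c/m$, hence $\int_{-\pi/2}^{\pi/2}\left|G'\left(e^{\mathrm{i}t}\right)\right|\mathrm{d}t\le c\lambda/(n-l)$, and combining with the previous display gives $\pr\left(Z^{(l)}_{n-l}=k\right)\le c/(k(n-l))$ for every $k\ge1$ and $1\le l<n$. (If the indexing of Lemma~\ref{lemma10} is adopted, one replaces $n-l$ by $n-l-1$ inside $f$; this changes nothing for $n-l\ge2$, while for $n-l=1$ one has $G'=h'$ and the bound is immediate.)

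I do not anticipate a real obstacle: the statement is essentially a corollary of the method of Lemma~\ref{lemma9} applied to the single composition $h\circ f_{n-l}$ instead of the product $\prod_k h\left(f_k(\cdot)\right)$, and every analytic input (the concentration function bound, $|h'|\le\lambda$, and the decay of $\left|f'_m\left(e^{\mathrm{i}t}\right)\right|$) is already on hand. The only point deserving care is that the estimate must be uniform over all $k\ge1$, in particular in the range $k\gg n-l$, where neither the Markov bound $\pr\left(Z^{(l)}_{n-l}\ge k\right)\le\lambda/k$ nor Lemma~\ref{lemma10} is by itself strong enough; it is the size-biasing implicit in working with $G'/G'(1)$ rather than $G$ that supplies the extra factor $1/(n-l)$, because the size-biased law of $Z^{(l)}_{n-l}$ concentrates on scale $n-l$ and so has a characteristic function that already decays for $|t|\gtrsim 1/(n-l)$.
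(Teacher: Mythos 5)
Your proof follows essentially the same route as the paper: both apply the concentration-function inequality \eqref{eq:con.f.bound} with $a=\pi/2$ to the generating function $h'(f_m(s))f'_m(s)$ of the size-biased sequence $k\pr(Z^{(l)}_{n-l}=k)$, bound $|h'|\le h'(1)$, and reduce to $\int_{-\pi/2}^{\pi/2}|f'_m(e^{\mathrm{i}t})|\,\mathrm{d}t\le c/m$. The only difference is that the paper simply cites \cite{NV06} for that integral bound, whereas you re-derive it from the pointwise estimate $|f'_m(e^{\mathrm{i}t})|\le c(m|t|)^{-3/2}$ together with $|f'_m|\le 1$ near $t=0$, which is a correct and self-contained variant of the same argument.
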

\begin{proof}
The generating function of $Z^{(l)}_{n-l}$ equals
$h\left(f_{n-l-1}(s)\right)$. Therefore, the generating function of
the sequence $k\pr\left(Z^{(l)}_{n-l}=k\right)$ equals
$h'\left(f_{n-l-1}(s)\right)f'_{n-l-1}(s)$.
Applying \eqref{eq:con.f.bound} with $a=\pi/2$, we have
\begin{align*}
k\pr\left(Z^{(l)}_{n-l}=k\right)
&\le c\int_{-\pi/2}^{\pi/2}
|h'\left(f_{n-l-1}(e^{\mathrm{i}t})\right)f'_{n-l-1}(e^{\mathrm{i}t})|\mathrm{d}t \\
&\le ch'(1)\int_{-\pi/2}^{\pi/2}
|f'_{n-l-1}(e^{\mathrm{i}t})|\mathrm{d}t.
\end{align*}
It has been shown in the proof of Lemma 9 in \cite{NV06}
that the integral on the right-hand side is bounded by
$c/\left(n-l\right)$. Thus, the proof is finished.
\end{proof}
The next result is a simple generalisation of the local limit theorem for critical Galton-Watson processes proven in \cite{NV06}.
\begin{lemma}
\label{lem:llt}
Let $Z_{n}$ be a critical Galton-Watson process with finite variance. Let $g\left(s\right)$ be a generating function of $Z_{0}$.
If $g'(1)<\infty$ then
$$
\pr\left(Z_{n}=j\right)=\frac{4g'(1)}{B^{2}n^{2}}\exp\left\{-\frac{2j}{Bn}\right\}\left(1+o\left(1\right) \right)
$$
for all $j$ such that the ratio $j/n$ remains bounded.
\end{lemma}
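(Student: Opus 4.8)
The plan is to condition on $Z_0$ and reduce the statement to the local limit theorem of \cite{NV06} for a critical Galton--Watson process started from a single particle, showing along the way that the event that two or more of the initial $Z_0$ lines have descendants alive at time $n$ is negligible. Throughout I set $P_1(j):=\pr(Z_n=j\mid Z_0=1)$ and $p_n:=1-f_n(0)=\pr(Z_n>0\mid Z_0=1)$, and recall from \eqref{eq:f-iterations} that $p_n\sim 2/(Bn)$, so that $c_0/n\le p_n\le1$ for some $c_0>0$ and all $n\ge1$. Decomposing the time-$0$ population of size $m$ into $m$ independent one-ancestor copies and letting $N_m\sim\mathrm{Bin}(m,p_n)$ be the number of those copies still alive at time $n$, I would use that, for every $j\ge1$,
\[
\pr(Z_n=j\mid Z_0=m)=m(1-p_n)^{m-1}P_1(j)+R_{m,n}(j),
\]
where the first term is the contribution of $\{N_m=1\}$ (on this event the conditional law of $Z_n$ is that of $Z_n$ given $Z_0=1$ and $Z_n>0$, and the factor $p_n$ from $\pr(N_m=1)$ cancels the normalisation), and $R_{m,n}(j)\ge0$ gathers the terms with $N_m\ge2$.

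The heart of the matter is to show $\sum_m\pr(Z_0=m)R_{m,n}(j)=o(n^{-2})$ uniformly in $j$. Conditionally on $\{N_m=r\}$ with $r\ge2$, $Z_n$ is a sum of $r$ i.i.d.\ copies of $Z_n$ conditioned to be positive, so its point probabilities are at most $\sup_{k\ge1}\pr(Z_n=k\mid Z_0=1)/p_n$; by the bound $\sup_{k\ge1}\pr(Z_n=k\mid Z_0=1)\le c/(n+1)^2$ from \cite{NV06} (also recalled in the proof of Lemma~\ref{lemma10}) together with $p_n\ge c_0/n$, this is at most $c'/n$. Hence $R_{m,n}(j)\le (c'/n)\pr(N_m\ge2)$, and summing against the law of $Z_0$ gives $\sum_m\pr(Z_0=m)R_{m,n}(j)\le (c'/n)\pr(N_{Z_0}\ge2)$. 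It then remains to prove $\pr(N_{Z_0}\ge2)=o(n^{-1})$. Fix $\delta>0$ and split according to whether $Z_0<\delta/p_n$ or not. On the first event $\pr(\mathrm{Bin}(m,p_n)\ge2)\le\binom{m}{2}p_n^2\le(mp_n)^2/2\le(\delta/2)mp_n$, so this part is at most $(\delta/2)p_n\,\e[Z_0]=O(\delta/n)$. On the second event bound the probability by $1$, obtaining $\pr(Z_0\ge\delta/p_n)$; since $\delta/p_n$ is of order $n$ and $\e[Z_0]=g'(1)<\infty$, Markov's inequality in the form $\pr(Z_0\ge t)\le t^{-1}\e[Z_0;Z_0\ge t]$ shows this is $o(n^{-1})$. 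Letting $\delta\to0$ gives $\pr(N_{Z_0}\ge2)=o(n^{-1})$, hence $\sum_m\pr(Z_0=m)R_{m,n}(j)=o(n^{-2})$, uniformly in $j$.

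For the main term, $m(1-p_n)^{m-1}\le m$ and $m(1-p_n)^{m-1}\to m$ as $p_n\to0$, so dominated convergence (with summable dominating function $m\mapsto m\pr(Z_0=m)$, using $g'(1)<\infty$) yields $\sum_m\pr(Z_0=m)\,m(1-p_n)^{m-1}\to g'(1)$. The local limit theorem of \cite{NV06} gives $P_1(j)=\tfrac{4}{B^2n^2}e^{-2j/(Bn)}(1+o(1))$ uniformly over $1\le j\le Mn$ for every fixed $M$; in particular $P_1(j)\ge c/n^2$ on that range once $n$ is large, so the $o(n^{-2})$ remainder is negligible relative to it. Combining the two contributions,
\[
\pr(Z_n=j)=P_1(j)\bigl(g'(1)+o(1)\bigr)+o(n^{-2})=\frac{4g'(1)}{B^2n^2}\,e^{-2j/(Bn)}\bigl(1+o(1)\bigr),
\]
uniformly over $1\le j\le Mn$, which is the claimed asymptotic (for $j\ge1$).

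I expect the remainder estimate to be the only real obstacle. The crude union bound $\pr(N_m\ge2)\le\binom{m}{2}p_n^2$ only makes the remainder $O(n^{-2})$, i.e.\ of the same order as the main term, so one genuinely has to exploit that a finite first moment of $Z_0$ forces $\pr(Z_0\ge\delta n)=o(n^{-1})$ and to truncate the initial population at a level of order $n$; the rest is routine once the inputs from \cite{NV06} are in place.
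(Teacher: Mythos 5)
Your proof is correct, and it follows the same global plan as the paper (condition on $Z_0$, isolate the contribution of exactly one surviving ancestral line, and show the contribution of two or more is negligible, then apply the local limit theorem of Nagaev--Vakhtel to the main term). The one place where you genuinely diverge is the remainder estimate. The paper observes that, after bounding the point probability of a sum of $t$ surviving lines by $c/n$, the remainder sums to
\[
\frac{c}{n}\sum_{k} g_k\sum_{t\ge2}\binom{k}{t}\bigl(1-f_n(0)\bigr)^t f_n^{k-t}(0)
=\frac{c}{n}\Bigl(1-g(f_n(0))-g'(f_n(0))(1-f_n(0))\Bigr),
\]
a Taylor remainder which is $o(1-f_n(0))=o(n^{-1})$ precisely because $g'(1)<\infty$. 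You instead recognise the same quantity probabilistically as $\frac{c}{n}\pr(N_{Z_0}\ge2)$ and prove $\pr(N_{Z_0}\ge2)=o(n^{-1})$ by truncating $Z_0$ at level $\delta/p_n$, using the elementary binomial bound below the truncation and Markov's inequality (in its tail-expectation form) above it. The two arguments are morally equivalent ways of spending the hypothesis $g'(1)<\infty$; the paper's generating-function identity is a little slicker, while yours is more transparent and also needs only the single-particle concentration bound $\sup_k\pr(Z_n=k\mid Z_0=1)\le c/(n+1)^2$, whereas the paper cites the multi-particle bound of Lemma~11 in \cite{NV06} (but then only uses it with the $\sqrt t$ factor discarded, so the two inputs are in practice the same). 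One small point worth keeping explicit: the order of limits in your $\delta$-truncation is $n\to\infty$ first, then $\delta\to0$, which you do correctly; this gives $\limsup_n n\pr(N_{Z_0}\ge2)\le C\delta$ for every $\delta>0$ and hence $o(n^{-1})$.
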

\begin{proof}
 Let $\{g_{k}\}$ denote the coefficients of $g\left(s\right)$. Then we have
\begin{align*}
\pr\left(Z_n=j\right)
&=\sum_{k=1}^\infty g_{k}\pr\left(Z_{n}=j|Z_{0}=k\right)\\
&=\sum_{k=1}^\infty g_{k}\sum_{t=1}^{k}{k\choose t}
(1-f_n(0))^{t}f_{n}^{k-t}(0)
\pr\left(\sum_{i=1}^t\xi_i^{(n)}=j\right),
\end{align*}
where $\{\xi_{m}^{(n)}\}$ is a sequence of independent and identically distributed random variables with joint distribution
$$
\pr\left(\xi_{m}^{(n)}=j\right)=\pr\left(Z_{n}=j|Z_{0}=1,Z_{n}>0\right).
$$
Separating the summands corresponding to $t=1$, we have
\begin{align}
\label{eq:llt1}
\nonumber
\pr\left(Z_{n}=j\right)
&=\sum_{k=1}^\infty kg_k(1-f_n(0))^{k-1}
\pr(Z_n=j|Z_0=1)\\
&\hspace{0.5cm}
+\sum_{k=1}^\infty g_{k}\sum_{t=2}^k{k\choose t}
\left(1-f_n(0)\right)^{t}f_{n}^{k-t}(0)
\pr\left(\sum_{i=1}^t\xi_{i}^{(n)}=j\right).
\end{align}
According to Lemma 11 in \cite{NV06},
$$
\max_{j\ge1}\pr\left(\sum_{i=1}^t\xi_{i}^{(n)}=j\right)
\le\frac{c}{n\sqrt{t}}.
$$
Therefore,
\begin{align*}
&\sum_{k=1}^\infty g_k\sum_{t=2}^k{k\choose t}
\left(1-f_{n}(0)\right)^{t}f_{n}^{k-t}(0)
\pr\left(\sum_{i=1}^{t}\xi_i^{(n)}=j\right)\\
&\hspace{1cm}
\le\frac{c}{n}\sum_{k=1}^\infty g_{k}
\sum_{t=2}^k{k\choose t}(1-f_{n}(0))^{t}f_{n}^{k-t}(0)\\
&\hspace{1cm}
=\frac{c}{n}\sum_{t=2}^\infty \frac{1}{t!}(1-f_{n}(0))^{t}
\sum_{k=t}^\infty g_k\frac{k!}{(k-t)!}
f_{n}^{k-t}(0)\\
&\hspace{1cm}
=\frac{c}{n}\sum_{t=2}^\infty \frac{1}{t!}(1-f_{n}(0))^{t}
g^{(t)}(f_{n}(0))
=\frac{c}{n}\left(1-g(f_{n}(0))-g'(1)(1-f_{n}(0))\right).
\end{align*}
In the last step, we have used the Taylor series representation for $g$ at point $f_{n}\left(0\right)$.
The assumption $g'(1)<\infty$ implies that
$$
1-g(f_{n}(0))-g'(1)(1-f_{n}(0)
=o\left(1-f_{n}(0)\right)=o\left(n^{-1}\right).
$$
Consequently,
\begin{equation}
\label{eq:llt2}
\sum_{k=1}^\infty g_k\sum_{t=2}^k{k\choose t}
(1-f_{n}(0))^{t}f_{n}^{k-t}(0)
\pr\left(\sum_{i=1}^{t}\xi_i^{(n)}=j\right)
=o\left(n^{-2}\right)
\end{equation}
uniformly in $j$. It remains to notice that Theorem in \cite{NV06} implies that
\begin{align*}
\sum_{k=1}^\infty kg_{k}f_{n}^{k-1}(0)\pr\left(Z_{n}=j|Z_{0}=1\right)
&=g'(f_{n}(0))\pr\left(Z_{n}=j|Z_{0}=1\right)\\
=\frac{4g'(1)}{B^{2}n^{2}}\exp\left\{-\frac{2j}{Bn}\right\}\left(1+o\left(1\right) \right)
\end{align*}
uniformly in all $j$ such that the ratio $j/n$ remains bounded.
Plugging this relation and \eqref{eq:llt2} into \eqref{eq:llt1}, we get the desired result.
\end{proof}

 \section{Asymptotics for lower deviation probabilities:\\ Proof of Theorem~\ref{thm:main}.}
We start by proving \eqref{eq:main2}.
Let us first show that the case $Y_0=i$ with arbitrary $i\ge0$ can be reduced to the case $Y_n=0$. By the branching property,
\begin{align}
\label{eq:reduction}
\nonumber
\pr(Y_n=k|Y_0=i)
&=\sum_{j=0}^k\pr(Y_n=j|Y_0=0)\pr(Z_n=k-j|Z_0=i)\\
\nonumber
&=\pr(Y_n=k|Y_0=0)\left(f_n(0)\right)^i\\
&\hspace{1cm}
+\sum_{j=0}^{k-1}\pr(Y_n=j|Y_0=0)\pr(Z_n=k-j|Z_0=i).
\end{align}
Using Lemma~\ref{lem:llt}, we obtain 
\begin{align*}
\sum_{j=0}^{k-1}\pr(Y_n=j|Y_0=0)\pr(Z_n=k-j|Z_0=i)
\le C\frac{i}{n^2}\pr(Y_n\le k|Y_0=0).
\end{align*}
If Theorem~\ref{thm:main} is proven in the case $i=0$
then 
$$
\pr(Y_n\le k|Y_0=0)
\le C\frac{k^\gamma L(k)}{n^\gamma L(n)}
$$
and, consequently,
$$
\sum_{j=0}^{k-1}\pr(Y_n=j|Y_0=0)\pr(Z_n=k-j|Z_0=i)
=o\left(\pr(Y_n=k|Y_0=0)\right).
$$
Plugging this into \eqref{eq:reduction}, we conclude that
$$
\pr(Y_n=k|Y_0=i)\sim \pr(Y_n=k|Y_0=0)
$$
for every fixed $i$. 
From now on we shall assume that $Y_0=0$.
By the law of total probability, for every $k\ge1$,
\begin{equation}
\label{eq:34}
 \pr\left(Y_{n}=k\right)= \sum_{l=1}^{n}\pr\left(Y_{n}=k, \theta_{n}=l \right).
\end{equation}
 Furthermore, for every $l$ we have
\begin{align}
\label{eq:Y-theta}
\nonumber
\pr\left(Y_{n}=k, \theta_{n}=l \right)
&= \pr\left(Z_{n-l}^{\left(l\right)}+Y_{n-l}=k, Z_{n-l}^{\left(l\right)}>0 \right)\prod_{j=1}^{l-1}  \pr\left(Z_{n-j}^{\left(j\right)}=0\right)\\
&=\pr\left(Z_{n-l}^{\left(l\right)}+Y_{n-l}=k, Z_{n-l}^{\left(l\right)}>0 \right)\prod_{j=n-l+1}^{n-1}h(f_{j}(0)).
\end{align}

We split the sum in \eqref{eq:34} into three parts. More precisely, we fix some $\varepsilon\in(0,1)$ and consider separately the sums over $\left[1, n-\varepsilon^{-1}k\right)$, $ \left[n-\varepsilon^{-1}k, n-\varepsilon k\right)$ and $\left[n-\varepsilon k, n\right]$.

To estimate the sum over $\left[1, n-\varepsilon^{-1}k\right)$, we apply first Lemma \ref{lemma10} to get
\begin{align*}
\pr\left(Z_{n-l}^{\left(l\right)}+Y_{n-l}=k,
Z_{n-l}^{\left(l\right)}>0\right)
& = \sum_{m=1}^{k} \pr\left(Z_{n-l}^{\left(l\right)}=m\right) \pr \left(Y_{n-l}=k-m \right)  \\
& \leq \frac{c}{(n-l)^2} \sum_{m=1}^{k} \pr\left(Y_{n-l}=k-m \right) \\
& \leq \frac{c}{(n-l)^2}\pr\left(Y_{n-l}\leq k \right).
\end{align*}
Applying now Lemmas~\ref{lemma4} and \ref{lemma5}, we conclude that
\begin{align*}
\pr\left(Z_{n-l}^{\left(l\right)}+Y_{n-l}=k,
Z_{n-l}^{\left(l\right)}>0\right)
&\le c_1\frac{1}{(n-l)^2}
\frac{k^\gamma L(k)}{(n-l)^\gamma L(n-l)}\\
&\le c_2(1-h(f_{n-l}(0)))
\frac{k^\gamma L(k)}{(n-l)^{\gamma+1} L(n-l)}.
\end{align*}
Combining this with \eqref{eq:Y-theta}, we obtain
\begin{align*}
\sum_{l\leq n-\varepsilon^{-1}k}
\pr\left(Y_{n}=k, \theta_{n}=l \right)
\le c k^{\gamma} L\left(k\right)
\sum_{l\leq n-\varepsilon^{-1}k}\pr\left(\theta_{n}=l\right)
\frac{1}{(n-l)^{\gamma+1} L\left(n-l\right)}.
\end{align*}

It is immediate from \eqref{eq:theta-prob2} and Lemma~\ref{lemma4} that
$$ \pr\left(\theta_{n}=l \right) \leq c \frac{\left(n-l \right)^{\gamma-1} L\left(n-l \right)}{n^{\gamma}L\left(n\right)}.$$
This bound implies that
$$
\sum_{l\leq n-\varepsilon^{-1}k}\pr\left(\theta_{n}=l\right)
\frac{1}{(n-l)^{\gamma+1} L\left(n-l\right)}
\le c\frac{1}{n^{\gamma}L\left(n\right)}
\sum_{l\leq n-\varepsilon^{-1}k}(n-l)^{-2}
\le c\frac{\varepsilon}{kn^{\gamma}L\left(n\right)}
$$
and, consequently,
\begin{equation}
\label{eq:39}
\sum_{l\leq n-\varepsilon^{-1}k}
\pr\left(Y_{n}=k, \theta_{n}=l \right)
\le c\varepsilon\frac{k^{\gamma-1}L\left(k\right)}{n^{\gamma}L\left(n\right)}.
\end{equation}

Combining Lemmas \ref{lemma9} and \ref{lemma10'}, we have
\begin{align*}
 \pr\left(Z_{n-l}^{\left(l\right)}+Y_{n-l}=k,Z_{n-l}^{\left(l\right)}>0\right)
& =\sum_{m=1}^{k} \pr\left(Z_{n-l}^{\left(l\right)}=m\right) \pr \left(Y_{n-l}=k-m \right)\\
& =\sum_{m=1}^{k/2} \pr\left(Z_{n-l}^{\left(l\right)}=m\right) \pr \left(Y_{n-l}=k-m \right) \\
&\hspace{1cm} +\sum_{m=k/2}^{k} \pr\left(Z_{n-l}^{\left(l\right)}=m\right) \pr \left(Y_{n-l}=k-m \right) \\
&\le \frac{c}{k}\pr(Z_{n-l}^{\left(l\right)}>0)
+\frac{c}{k(n-l)}\pr(Y_{n-l}\le m/2)\\
& \leq \frac{c}{k(n-l)}\le \frac{c(1-h(f_{n-l}(0)))}{k}.
\end{align*}
Applying this bound to the right-hand side in \eqref{eq:Y-theta} and taking into account \eqref{eq:theta-prob2}, we obtain
\begin{align*}
\sum_{l \geq n-\varepsilon k}
\pr\left(Y_{n}=k, \theta_{n}=l \right)
&\le \frac{c}{k}\sum_{l \geq n-\varepsilon k}
(1-h(f_{n-l}(0))\prod_{j=n-l+1}^{n-1}h(f_j(0))\\
&=\frac{c}{k}\sum_{l \geq n-\varepsilon k}\pr(\theta_n=l)\\
&=\frac{c}{k}\left(\pr(\theta_n>n-\varepsilon k)-
\pr(\theta_n>n)\right)\\
& = \frac{c}{k} \left(\prod_{j=1}^{n-\varepsilon k} h\left(f_{n-j}\left(0\right) \right)-\prod_{j=1}^{n} h\left(f_{n-j}\left(0\right) \right)\right).
\end{align*}
Recalling now Lemma~\ref{lemma4}, we conclude that
\begin{equation}
\label{eq:44}
\sum_{l \geq n-\varepsilon k}
\pr\left(Y_{n}=k, \theta_{n}=l \right)
 \leq c\varepsilon^{\gamma}\frac{k^{\gamma-1}L\left(n\right)}{n^{\gamma}L\left(n\right)}.
\end{equation}
Thus it remains to consider the sum over
the interval $I_{k}:=[n-k/\varepsilon,n-\varepsilon k]$.
Applying Lemma~\ref{lem:llt}, we have, uniformly in $l\in I_{k}$ and $m\le k$,
$$ \pr\left(Z_{n-l}^{\left(l\right)}=m\right)= h'\left(1\right)\frac{4}{B^{2}(n-l)^{2}}\exp\left\{-\frac{2m}{B(n-l)}\right\}+o\left(\frac{1}
{(n-l)^{2}}\right).  $$
Therefore,
\begin{align*}
\pr\left(Z_{n-l}^{\left(l\right)}+Y_{n-l}=k,Z_{n-l}^{\left(l\right)}>0\right)
&=
\sum_{m=1}^{k} \pr\left(Z_{n-l}^{\left(l\right)}=m\right) \pr \left(Y_{n-l}=k-m \right)\\
&=\frac{4h'(1)+o\left(1\right)}{B^2(n-l)^{2}}
\mathbf{E}\left[e^{-2(k-Y_{n-l})/B(n-l)}; Y_{n-l} \leq k \right].
\end{align*}
According to \eqref{eq:gamma-limit},
\begin{align*}
 \mathbf{E}
 \left[e^{2Y_{n-l})/B(n-l)}; Y_{n-l} \leq k \right]
 &=\left(1+o\left(1\right)\right)\int_{0}^{2k/B(n-l)}e^{u}\frac{1}{\Gamma(\gamma)}
 u^{\gamma-1}e^{-u}du\\
 &=\frac{1+o\left(1\right)}{\Gamma(\gamma+1)}
 \left(\frac{2k}{B(n-l)}\right)^\gamma.
\end{align*}
Consequently,
\begin{equation}
\label{eq:lower1}
\pr\left(Z_{n-l}^{\left(l\right)}+Y_{n-l}=k,Z_{n-l}^{\left(l\right)}>0\right)
=\frac{h'(1)+o(1)}{\Gamma(\gamma+1)}\frac{1}{k^2}
\left(\frac{2k}{B(n-l)}\right)^{\gamma+2}
e^{-2k/B(n-l)}.
\end{equation}
We know from Lemma~\ref{lemma4} that
\begin{equation}
\label{eq:lower2}
\prod_{j=n-l+1}^{n-1}h\left(f_{j}(0)\right)
\sim\frac{(n-l)^{\gamma} L\left(n-l\right)}{n^{\gamma} L\left(n\right)}.
\end{equation}
Plugging \eqref{eq:lower1} and \eqref{eq:lower2} into
\eqref{eq:Y-theta} and summing over $l$, we deduce
\begin{align}
\label{eq:lower3}
\nonumber
\sum_{l\in I_k}
\pr\left(Y_{n}=k, \theta_{n}=l \right)
&=\frac{h'(1)+o(1)}{\Gamma(\gamma+1)n^{\gamma} L\left(n\right)}
\frac{1}{k^2}\left(\frac{2k}{B}\right)^{\gamma+2}
\sum_{l\in I_{k}}\frac{L\left(n-l\right)}{(n-l)^2}e^{-2k/B(n-l)}\\
&=\frac{h'(1)+o(1)}{\Gamma(\gamma+1)}\left(\frac{2}{B}\right)^{\gamma+2}\frac{k^{\gamma} L\left(k\right)}{n^{\gamma} L\left(n\right)}
\sum_{l\in I_{k}}\frac{e^{-2k/B(n-l)}}{(n-l)^2}.
\end{align}
Approximating the sum by the integral, we obtain
\begin{align*}
\sum_{l\in I_{k}}\frac{e^{-2k/B(n-l)}}{(n-l)^2}
&=\frac{1}{k}\sum_{\varepsilon k\le j\le k/\varepsilon}
\left(\frac{k}{j}\right)^{2}e^{-2k/Bj}\frac{1}{k}\\
&=\frac{1+o(1)}{k}\int_{\varepsilon}^{1/\varepsilon}
u^{-2}e^{-2/Bu}\mathrm{d}u\\
&=\frac{B+o(1)}{2k}\left(e^{-2\varepsilon/B}
-e^{-2/B\varepsilon}\right).
\end{align*}
Plugging this into \eqref{eq:lower3} and recalling that
$\gamma=2h'(1)/B$, we have
\begin{align}
\label{eq:lower4}
\sum_{l\in I_{k}}
\pr\left(Y_{n}=k, \theta_{n}=l \right)
=\frac{1+o(1)}{\Gamma(\gamma)}
\left(\frac{2}{B}\right)^{\gamma}
\frac{k^{\gamma-1} L\left(k\right)}{n^{\gamma} L\left(n\right)}
\left(e^{-2\varepsilon/B}-e^{-2/B\varepsilon}\right).
\end{align}
Combining now \eqref{eq:39}, \eqref{eq:44}, \eqref{eq:lower4} and letting $\varepsilon \to 0$,
we get \eqref{eq:main2}.

The proof of \eqref{eq:main1} is very similar and, in some parts, even simpler. For that reasons, we omit it.

To conclude the proof of Theorem~\ref{thm:main}, it remains to establish \eqref{eq:main3}. To this end, we shall use again
\eqref{eq:34}. Since \eqref{eq:39} is valid for all $k$ it remains to consider the case when $l\ge n-k/\varepsilon$. In view of \eqref{eq:Y-theta}, we have to determine the asymptotic behavior of
$$
\sum_{m\le k/\varepsilon}
\pr\left(Z_{m}^{\left(l\right)}+Y_{m}=k, Z_{m}^{\left(l\right)}>0 \right)\prod_{j=m+1}^{n-1}h\left(f_{j}(0)\right).
$$
Since this sum can be rewritten in the following way
\begin{align*}
&\sum_{m\le k/\varepsilon}
\pr\left(Z_{m}^{\left(l\right)}+Y_{m}=k, Z_{m}^{\left(l\right)}>0 \right)\prod_{j=m+1}^{n-1}h\left(f_{j}(0)\right)\\
&\hspace{1cm}=\prod_{j=0}^{n-1}h\left(f_{j}(0)\right)
\sum_{m\le k/\varepsilon}\frac{
\pr\left(Z_{m}^{\left(l\right)}+Y_{m}=k, Z_{m}^{\left(l\right)}>0 \right)}{\prod_{j=0}^{m}h\left(f_{j}(0)\right)},
\end{align*}
we infer from Lemma~\ref{lemma3} that
\begin{align*}
 \sum_{l> n-\varepsilon^{-1}k}
\pr\left(Y_{n}=k, \theta_{n}=l \right)
\sim \frac{1}{n^{\gamma}L\left(n\right)}
\sum_{m\le k/\varepsilon}\frac{
\pr\left(Z_{m}^{\left(l\right)}+Y_{m}=k, Z_{m}^{\left(l\right)}>0 \right)}{\prod_{j=0}^{m}h\left(f_{j}(0)\right)}.
\end{align*}
Combining this with \eqref{eq:39}, we get the asymptotic relation for every fixed $k$. This completes
the proof of Theorem~\ref{thm:main}.

\end{document}